\numberwithin{equation}{section}
\newtheorem{lem}{Lemma}
\newtheorem{prop}{Proposition}
\newtheorem{coro}{Corollary}
\numberwithin{theo}{section}
\numberwithin{lem}{section}
\numberwithin{prop}{section}
\numberwithin{alg}{section}
\numberwithin{assum}{section}
\numberwithin{defi}{section}
\numberwithin{coro}{section}
\theoremstyle{remark}
\newtheorem{remark}{Remarks}
\numberwithin{remark}{section}
\def\Tiny{\fontsize{4pt}{4pt}\selectfont}
\newcommand*{\eqdef}{\ensuremath{\overset{\mathclap{\text{\Tiny def}}}{=}}}
\def\Tiny{\fontsize{4pt}{4pt}\selectfont}
\title{Towards Scalable Semidefinite Programming: Optimal Metric ADMM with A Worst-case Performance Guarantee
} 
\author{ 	
	Yifan Ran,  Stefan  Vlaski, and Wei Dai
}
\date{}
\begin{document}
	\maketitle
	\makeatletter{\renewcommand*{\@makefnmark}{}
		\footnotetext{
			The	authors are with the department of Electrical and Electronics Engineering, Imperial College London, UK (e-mail: \{y.ran18,  s.vlaski, wei.dai1\}@imperial.ac.uk).  \makeatother}}

	\begin{abstract}
		Despite  the numerous uses of semidefinite programming (SDP) and its universal solvability  via interior point methods  (IPMs),
		it is rarely applied to practical large-scale problems. This mainly owes to the  computational cost of IPMs  that increases  in a bad exponential  way with the   data size.  
		While first-order algorithms such as ADMM can alleviate this issue, but the scalability improvement appears far not enough. 
		In this work,  we aim to achieve extra acceleration for ADMM by appealing to a  non-Euclidean metric space, while maintaining   everything in closed-form expressions.
		The efficiency gain comes from  the  extra degrees of freedom of a  variable metric compared to a scalar step-size, which allows us  to capture some additional ill-conditioning structures.

		
		On the  application side, we consider the  quadratically constrained quadratic program (QCQP), which  naturally appears in  an  SDP form after  a  dualization procedure. 
		This technique, known as  semidefinite relaxation,   has important  uses  across different fields,  particularly  in wireless communications. 
		Numerically, we observe that  the scalability property is  significantly  improved. 
		Depending  on the data generation process, the  extra  acceleration can easily surpass the  scalar-parameter   efficiency  limit, and  the advantage is rapidly increasing  as  the data  conditioning becomes worse.  
		
	
	\end{abstract}
	
	\vspace{5pt}
	\noindent 
	$ \textbf{Keywords:} $\, Semidefinite programming,  Variable metric methods, 
	Preconditioning,  Parameter selection, Duality,
	  Quadratically constrained quadratic program,  Alternating  Direction Method of Multipliers (ADMM)

	\section{Introduction}
	Semidefinite programming (SDP)  \cite{vandenberghe1996semidefinite,boyd1994linear,vandenberghe1995primal,kamath1992continuous,kamath1993nl} is widely recognized as one  of the most  important breakthroughs in the last century. 
	It is developed as a generalization to linear programming (LP), and   is not much harder to solve  \cite{vandenberghe1996semidefinite}.
	The solvability issue is perfectly addressed  by the  interior-point methods  (IPMs), which were first introduced  in 1984 by Karmarkar \cite{karmarkar1984new} for LP,
	and later  extended to all   convex  programs by  Nesterov and Nemirovsky in 1988 \cite{nesterov1988general,nesterov1990optimization,nesterov1992conic,nesterov1994interior}.
	 Many well-known convex solvers such as  CVX \cite{grant11}, and  MOSEK \cite{aps2019mosek} are built on the  IPMs. 
	Despite the great successes, the IPMs  are not well-suited to a newly arisen  challenge, `Big Data'. 
	This is due to its   second-order nature that involves Hessian information. The Hessian is in general a dense matrix  even when the data is highly structured. The computational  cost  is therefore  in  general  prohibitively  expensive for  large-scale data.
	In the literature,   first-order algorithms (FOAs) that only use  gradient (or subgradient) information are considered one of the  most promising tools  for large-scale problems, see a comprehensive survey paper by  Amir Beck \cite{beck2017first}, and a simplified  view  by Marc Teboulle \cite{teboulle2018simplified}.


	Among the  large  class of FOAs,  ADMM has  received an  increasing  amount of  attention. This partly owes  to its  outstanding practical performance and  also    elegant theory, see a survey paper by   Stephen Boyd et. al. \cite{boyd12}.
	The ADMM  algorithm  itself is not  new,  originally proposed by Glowinski and Marrocco \cite{glowinski1975approximation} in 1975, and Gabay and Mercier \cite{gabay1976dual} in 1976. 
	It is known to be equivalent to  some  popular algorithms from other fields. 
	 The perhaps most important one is the Douglas-Rachford Splitting (DRS) \cite{lions1979splitting, douglas1956numerical, peaceman1955numerical,  eckstein1992douglas}  in numerical  analysis.  Due to their  equivalence, the ADMM is typically analysed by converting to the DRS, and invoking the fixed-point theory   \cite{bauschke2017convex, ryu2022large}.
	Most recently, Yifan  Ran established a direct fixed-point analysis for ADMM (that does  not  need DRS), see \cite[sec. 6]{ran2023equilibrate}.
	Additionally, the Primal-Dual Hybrid Gradient (PDHG) method \cite{chambolle2011first, esser2010general, pock2009algorithm}  is also shown to be equivalent to ADMM, see  O'Connor and  Vandenberghe \cite{o2020equivalence}.
	Apart from being equivalent, some strong connections are known  between ADMM and Spingarn’s method of partial inverses, Dykstra’s alternating projections method, Bregman iterative algorithms for $ l_1 $ problems in signal	processing,  see more details from  \cite{boyd12}.

%

	In the literature, different FOAs for solving  SDP  mainly
	 differ  in  how  the positive semidefinite (PSD)  constraint is handled. 
	One typical approach is by noting that any PSD matrix  can be characterized by $ \bm{R}^T\bm{R} $. 
	Hence, the PSD  requirement is removed if the variable is substituted to  $ \bm{R} $, see related  works in  \cite{burer2003nonlinear, burer2005local,wang2023decomposition}. 
	It is  worth  noticing  that the efficiency  of this approach highly depends  on  the  dimension  of  $ \bm{R} $.  Indeed,  if the underlying solution is low-rank,  $ \bm{R} $ admits very few columns and  the problem could  be solved efficiently.
	Another way is to enforce  the PSD requirement by  a new   projected variable,  i.e.,  $ \Pi_{\mathbb{S}^{N}_+}  (\bm{X})$, where $ \Pi_{\mathbb{S}^{N}_+} $  is a projector onto the PSD cone, equivalent to setting all negative eigenvalues to zeros.
	 Owing  to such a projector being  strongly semi-smooth \cite{sun2002semismooth}, an inexact semi-smooth Newton-CG method is designed, see \cite{zhao2010newton}.
	The last approach is  via ADMM, which handles the PSD  requirement  by introducing an  auxiliary variable and performs a projection $ \Pi_{\mathbb{S}^{N}_+}$ separately, see a typical success  in \cite{wen2010alternating}.

	Despite the  successes of FOAs, they  share a common weakness  --- their performances highly depend on the conditioning of the data. 
	The  technique  for  addressing this issue is  known  as  \textit{preconditioning}.  A typical heuristic to select a preconditioner is by appealing to the condition number, where, given a linear system $ \bm{Ax}  = \bm{b} $, one rewrites it into $ \bm{EAx}  = \bm{Eb}  $, and  selects   the \textit{preconditioner} $ \bm{E} $  such that the  new  matrix  $ \bm{EA} $   has  a  smaller condition number  (the largest  eigenvalue divide  the smallest eigenvalue).
	This approach is  well-known and perhaps the most popular one. 
	However, we emphasize that it is in nature a heuristic. 
	That said,  even when the condition number is minimized, there is no guarantee that the algorithm convergence rate will  be improved. 
	Indeed,  how to select the preconditioner to optimize the convergence  rate   is open, see e.g.  \cite[sec. 1.2, First-order methods]{stellato2020osqp},   \cite[sec.8, Parameter selection]{ryu2022large}.
	Due to the lack of optimal choice, the  preconditioner is typically manually tuned in practice. 
	However, manual tuning is problematic. First, for a  matrix preconditioner, there exist multiple parameters, depending  on the degrees of freedom.  How to simultaneously tune them seems to be a huge obstacle. Even when tuning a scalar, the fine-tuned choice is likely not transferable to other types of data, or simply different data sizes.
	Indeed,  we  note that   in \cite{zhao2010newton}, the authors fix  the step-size parameter  to either $ 1 $ or $ 1.6 $, without adapting it to different input data.
	
	Most recently, such an optimal parameter issue  has been addressed by Yifan Ran, where the author optimizes a   general worst-case convergence  rate without  strong assumptions, 
	see the scalar case in  \cite{ran2023general} and the metric space case in  \cite{ran2023equilibrate}.  
	In either  case, the optimal parameter choice  	can be determined by solving a degree-4 polynomial. Except,  
	  in the scalar setting, a closed-form optimal solution always exists  regardless of the initialization choice; 
	in a  non-Euclidean metric space, despite  the  potential  extra acceleration,   a  closed-form expression for both  the metric choice and the algorithm iterates are not guaranteed. 
	
	In this work, we consider the metric space case  in order to gain  extra acceleration. We will address the closed-form issue. To our best knowledge, there is no metric-type ADMM  applied  to SDP in the literature. This is likely due to the aforementioned projection step $ \Pi_{\mathbb{S}^{N}_+}$ no longer works in a non-Euclidean space, i.e., it is not a feasible ADMM iterate.
	 Indeed, we will  show that, given a special class of (decomposed) metrics $  \mathcal{S} $,  the  feasible projection step would be
	$ \mathcal{S}^{-1}\Pi_{\mathbb{S}^{N}_+}\mathcal{S} $.

		On the  application side, we consider the  quadratically constrained quadratic program (QCQP). It is NP-hard and non-convex in nature.
		A  typical  approach, known as \text{semidefinite relaxation}, can reformulate it into an SDP. This technique plays a fundamental role in wireless communications, see a survey paper \cite{luo2010semidefinite}  and references therein. 
		We will present  the first metric-type ADMM solver for the convexified QCQP and numerically evaluate its performance.

	\subsection{Notations}
	The Euclidean space  is denoted by $  \mathscr H $   with inner product  $ \langle \cdot, \, \cdot \rangle   $ equipped, and its induced norm $ \Vert \cdot \Vert $. We denote  its  extension to a metric space as $  \mathscr {H}_{\mathcal M} $,
	with norm $\Vert\bm{v} \Vert_{\mathcal{M}} = \sqrt{\langle \bm{v},  \mathcal{M}\bm{v}\rangle} $.
	We denote by $  \Gamma_0 (\mathscr H) $  the  space  of  convex, closed  and proper  (CCP) functions from $ \mathscr H $  to the extended real line  $ ]-\infty, +\infty]$,
	by $ \mathbb{S}^N  $, the  space of $ N $-dimensional symmetric matrices,
	by $ \mathbb{S}^N_+  $, the  space of $ N $-dimensional positive semidefinite matrices.
	At last, the uppercase bold, lowercase bold, and not bold letters are used for matrices, vectors, and scalars, respectively. The uppercase calligraphic letters, such  as $ \mathcal{A} $ are used to denote  operators.

	\subsection{ADMM algorithm}
	The ADMM algorithm solves the following general convex problem:
	\begin{align}\label{ADMMpro}
	&\,\underset{\bm{x},\bm{z}}{\text{minimize}}\quad   f(\bm{x}) + g(\bm{z})  \nonumber\\
	&\text{subject\,to}\quad  \mathcal{A}\bm{x} - \mathcal{B}\bm{z} = \bm{c} ,
	\end{align}
with functions $ f, g \in \Gamma_0  (\mathscr H)$ and operators $ \mathcal{A},\, \mathcal{B} $ being injective. Moreover,  we assume a solution    exists.

	\subsubsection{Basic  iterates}
	The  basic ADMM iterates are based on the following augmented Lagrangian: 
	\begin{equation}\label{L_gamma}
	\mathcal{L}_\gamma(\bm{x},\bm{z},\bm{\lambda}) 
	\,=\,  f(\bm{x}) + g (\bm{z})  + \frac{\gamma}{2}\Vert\mathcal{A}\bm{x} - \mathcal{B}\bm{z} - \bm{c} + \bm{\lambda}/\gamma\Vert^2,  
	\end{equation}	
	where $\gamma > 0$ denotes a positive step-size.	 The ADMM iterates are 
	\begin{align}\label{admm_sca}
	\bm{x}^{k+1} =\,\,& \underset{\bm{x}}{\text{argmin}} \,\, 	\mathcal{L}_\gamma(\bm{x},\bm{z}^k,\bm{\lambda}^k)  \nonumber\\				
	\bm{z}^{k+1} =\,\, & \underset{\bm{z}}{\text{argmin}} \,\, 	\mathcal{L}_\gamma(\bm{x}^{k+1},\bm{z},\bm{\lambda}^k) \nonumber\\
	\bm{\lambda}^{k+1} =\,\,  &\bm{\lambda}^{k} + \gamma( \mathcal{A}\bm{x}^{k+1} - \mathcal{B}\bm{z}^{k+1} - \bm{c}),	  \tag{scalar ADMM}
	\end{align}	
	which are guaranteed to converge (if minimizers  exist)  with any  positive step-size $\gamma > 0$.
	
	\subsubsection{Metric space  iterates}
	The above $\gamma$-parametrized iterates can be extended to a  metric space environment $\mathscr H_\mathcal{M}$,   with the following augmented Lagrangian: 
\begin{equation}
\mathcal{L}_\mathcal{M}(\bm{x},\bm{z},\bm{\lambda})
\,=\,   f(\bm{x}) + g (\bm{z})  + \frac{1}{2}\Vert \mathcal{A}\bm{x} - \mathcal{B}\bm{z}  + \mathcal{M}^{-1}\bm{\lambda}\Vert^2_{\mathcal M},  
\end{equation}
with  norm $\Vert\bm{v} \Vert_{\mathcal{M}} = \sqrt{\langle \bm{v},  \mathcal{M}\bm{v}\rangle} $.
 The ADMM iterates are 
	\begin{align}\label{admm_met}
	\bm{x}^{k+1} =\,\,& \underset{\bm{x}}{\text{argmin}} \,\, 	\mathcal{L}_\mathcal{M}(\bm{x},\bm{z}^k,\bm{\lambda}^k)  \nonumber\\				
	\bm{z}^{k+1} =\,\, & \underset{\bm{z}}{\text{argmin}} \,\, 	\mathcal{L}_\mathcal{M}(\bm{x}^{k+1},\bm{z},\bm{\lambda}^k) \nonumber\\
	\bm{\lambda}^{k+1} =\,\,  &\bm{\lambda}^{k} + \mathcal{M}( \mathcal{A}\bm{x}^{k+1} - \mathcal{B}\bm{z}^{k+1} - \bm{c}), 	  \tag{metric ADMM}
	\end{align}	
	which are guaranteed to converge (if minimizers  exist)  with any  positive definite metric  $\mathcal{M}  \succ 0$.


\subsection{Semidefinite programming}

Semidefinite programming (SDP) considers minimizing a linear function of variable $ \bm{x} \in\mathbb{R}^m $ subject to a Linear Matrix  Inequality (LMI). 
The standard form is  given below, see  e.g.   \cite{vandenberghe1996semidefinite}:
\begin{align}\label{pri_0}
\underset{\bm{x}}{\text{minimize}}\quad  &    \langle\bm{c}, \bm{x}\rangle\nonumber\\
\text{subject\,to}\quad &   \bm{A}_0 + \sum_{i=1}^m  x_i\bm{A}_i\succeq 0. \tag{Primal}
\end{align}
with  $ \bm{A}_i \in \mathbb{S}^{N}, \,\,  i= 0, 1, \dots, m  $  being symmetric matrices.
Its Fenchel dual problem  is 
\begin{align}\label{dual_0}
\qquad\qquad\,\,  \underset{\bm{X}}{\text{minimize}}\quad\,\,\,  &   \langle\bm{A}_0, \bm{X}\rangle  \nonumber\\
\qquad\qquad\,\,	\text{subject\,to}\quad\,\,\,				   &  \langle\bm{A}_i, \bm{X}\rangle  = c_i, \quad  i = 1,\dots, m \nonumber\\
&    \,\,   \bm{X} \succeq 0.  \tag{Dual}
\end{align}

\section{Preliminary: a unified treatment}
To  start, we show that \eqref{pri_0}  and \eqref{dual_0} share  the same intrinsic structure, 
and can be  handled in a  unified way, via
\begin{align}\label{sdp_uni}
&\,\underset{\bm{X},\bm{Z}}{\text{minimize}}\quad\,   f\,(\bm{X}) +  \delta_{\mathbb{S}^{N}_+} (\bm{Z})   \qquad \nonumber\\
&\text{subject\,to}\quad  \mathcal{A}\,  \big(\bm{X}\big) = \bm{Z}. 						\qquad\tag{Unified}
\end{align}
For \eqref{pri}, the matrix variable $ \bm{X} $ reduces to a vector $ \bm{x}	 $, and
the following definitions are used: 
\begin{align}\label{1}
\quad(\text{Function}) \qquad\,\,\,	&\,f\,(\bm{x}) \,\eqdef \,\,    \langle\bm{c}, \bm{x}\rangle	, \qquad 	 \text{dom}\, f =  \mathscr H 	\nonumber\\
\,\,(\text{Constraint} )  \qquad 	 & \mathcal{A}\,  \big(\bm{x}\big) \eqdef  \bm{A}_0 + \sum_{i=1}^m  x_i\bm{A}_i.
\end{align}
For \eqref{dual}, the following definitions are used: 
\begin{align}\label{2}
\quad(\text{Function}) \qquad\,\,\,	&f\,(\bm{X}) \,\,\eqdef    \langle\bm{A}_0, \bm{X}\rangle , \quad  \text{dom}\, f =  \{ \bm{X} \in \mathbb{S}^{N} \,\, |\,\, \langle\bm{A}_i, \bm{X}\rangle = c_i,  \,\, \forall i  \} 		\nonumber\\
\,\,(\text{Constraint} ) \qquad 	 &\mathcal{A}\,  \big(\bm{X}\big) \eqdef  \bm{X}.
\end{align}

Recall the abstract ADMM iterates in \eqref{admm_sca}, we can apply it to  the  above SDP   \eqref{sdp_uni} to obtain specific closed-form iterates:
\begin{align}\label{sdp_admm}
\bm{X}^{k+1} =\,\,& \underset{\bm{X}}{\text{argmin}} \,\,\,	f(\bm{X})  + \frac{\gamma}{2}\Vert\mathcal{A}\bm{X} - \bm{Z}^k  + \bm{\Lambda}^k/\gamma\Vert^2  \nonumber\\				
\bm{Z}^{k+1} =\,\, &\,  \Pi_{\mathbb{S}^{N}_+}  \, \big(\mathcal{A}\bm{X}^{k+1} + \bm{\Lambda}^{k}/\gamma  \big) \nonumber\\
\bm{\Lambda}^{k+1} =\,\,  &\,\bm{\Lambda}^{k} + \gamma  \big( \mathcal{A}\bm{X}^{k+1} - \bm{Z}^{k+1} \big),	 \tag{scalar solver}
\end{align}	
where $ \Pi_{\mathbb{S}^{N}_+}  $ is a projector onto the positive-definite cone $ \mathbb{S}^{N}_+ $,  which is  equivalent to setting all  the negative eigenvalues  (of the input) to zeros.
Let us note that $ f $ is a linear function, with either full domain as in  \eqref{1}, or domain being the  boundary of a polyhedron as in \eqref{2}. Both cases, the evaluation admits  a  closed-form expression. 
 Overall, we see that all iterates can be efficiently evaluated in such a scalar setting.

\subsection{Primal:  differentiation \& injectivity}
Here, we discuss some detailed issues for  the above primal setting \eqref{1}.   We note that an additional manipulation is  needed there to obtain the first-order information. 
Specifically, we need to rewrite the LMI into
\begin{align}\label{mani}
\mathcal{A}\,  \big(\bm{x}\big) =  \bm{A}_0 + \sum_{i=1}^m  x_i\bm{A}_i =  \bm{A}_0 +   \text{mat} \bigg( \widetilde{\bm{A}}\bm x \bigg),
\end{align}
with 
\begin{equation}\label{A_til}
\widetilde{\bm{A}} \eqdef [ \text{vec}(\bm{A}_1) , \cdots, \text{vec}(\bm{A}_m)  ]  \, \in \, \mathbb{R}^{N^2 \times m}.
\end{equation}
where  $  \text{mat}(\cdot) $ and $  \text{vec}(\cdot) $ denote  reshaping the input into a matrix, and  a vector, respectively, and  they are  inverse operations.
This enables a manipulation on the Euclidean norm term. That is, given a matrix input $ \bm  V $,  
\begin{equation}
\Vert 	\mathcal{A}\bm{x}	-   \bm  V \Vert^2 
= \Vert  \text{vec} \big( \mathcal{A}\bm{x}	-   \bm  V  \big)	\Vert^2
=  \Vert  \text{vec} \big( \bm{A}_0 -   \bm  V \big)+  \widetilde{\bm{A}}\bm x	\,\Vert^2
\end{equation}
which yields the following first-order  optimality condition:
\begin{align}\label{x_first}
\bm x^\star = - \big( \widetilde{\bm{A}}^T\widetilde{\bm{A}}  \big)^{-1}\widetilde{\bm{A}}^T \text{vec} \big( \bm{A}_0 -   \bm  V \big).
\end{align}

Moreover, in view of \eqref{x_first}, we see a natural requirement that  $  \widetilde{\bm{A}}^T\widetilde{\bm{A}}  $ should be  invertible, i.e.,   $ \widetilde{\bm{A}} $ has full column-rank (injective).  
That is,  we need all columns in  \eqref{A_til} not  linearly  dependent, which  is in fact implicitly  guaranteed.
This  is most clear from the dual setting \eqref{2}. 
Suppose  there exists such a linear dependency. Then,  constraints $  \langle\bm{A}_i, \bm{X}\rangle = c_i,  \,\, \forall i  $ will  contain  either duplicated or contradicted ones (depends on $ c_i $),  implying an ill-posed problem. We  should  omit this  trivial  case.

%

\subsection{Towards extra efficiency }\label{sec_chall}
By now, we have seen  that all  things  are  quite elegant in the scalar-parameter setting.
However, its efficiency appears still  not enough, as we find its iteration number increases quite significantly with the data size, particularly in some ill-conditioning settings. This motivates us to further exploit the  efficiency by appealing to a metric-parameter setting, as in \eqref{admm_met}. Due to the extra degree of freedoms in a variable metric, in principle we should be able to gain extra acceleration.
This is true, except some new challenges are accompanied:\\

$\bullet$\, (i) In a metric environment $\mathscr H_\mathcal{M}$, the  $ \bm{Z} $-update no  longer admits a closed-form expression as  in \eqref{sdp_admm};\\

$\bullet$\, (ii) Due to the extra degree of freedoms, tuning a metric parameter is much harder than  a scalar.\\

For the first obstacle (i), without a closed form, the algorithm efficiency will  dramatically decrease. For (ii), without knowing the optimal choice, one needs to manually  tune multiple  parameters simultaneously.  
We note that a randomly generated metric parameter is typically much worse than a random scalar parameter, implying the tuning difficulty. 
Even  one managed to obtain a fine-tuned choice, it is likely needs retune when the data setting  changes, either different data or  simply different sizes.

Indeed, these are two significant obstacles. 
As a consequence, despite  the promising extra acceleration of the metric-type ADMM, to  our best knowledge, it is not  employed for the SDP in the literature. 
In this work, we will address these two challenges. 
For challenge (i), we prove that if an additional `definiteness invariance' condition holds, then a closed-form expression can be obtained, see Prop. \ref{pro_clo}. 
Moreover, by appealing  to Schur-complement  lemma, we are able to construct  metrics satisfying this special condition.
The second  obstacle (ii) is addressed by fixing the metric to its optimal choice. Quite remarkably, given our constructed metrics, their optimal choices are guaranteed in closed-form expressions (under zero initialization).  
Furthermore, when equipping our metrics, the resulted solver is guaranteed no worse than \eqref{sdp_admm}, in the worst-case convergence rate sense. Additionally, we identify the data structure that would cause the worst situation.

	\section{Closed-form guaranteed   metrics}
	In the previous section, we discussed two obstacles for employing a metric parameter.  Here, we address the first challenge  there, the closed-form iterate  issue.
	Our success relies on employing a special class of metrics, that do not change the definiteness property of a matrix variable.
	
	\subsection{Definiteness invariant condition}
	Here, we  present what-we-call the  definiteness invariant condition. When it holds,  we prove that a closed-form expression can be obtained (in a non-Euclidean metric space).
	\begin{prop}\label{pro_clo}
		Consider a  metric space  environment $\mathscr H_\mathcal{M}$, with $ \langle \cdot, \,  \mathcal{M}\,\cdot \rangle = \langle \mathcal{S}\,\cdot, \,  \mathcal{S}\,\cdot \rangle  $. 	Suppose the decomposed metric $ \mathcal{S} $ admits the following definiteness-invariant characterization:
		\begin{equation}\label{de_inv}
		\mathcal{S}(\bm{Z}) \in \mathbb{S}^{N}_+   \,\, \Longleftrightarrow \,\,      \bm{Z} \in \mathbb{S}^{N}_+,  \tag{invariance}
		\end{equation}
				with $  \bm{Z} \in  \mathbb{S}^{N} $	being an arbitrary symmetric matrix,  		
 where  $ \mathbb{S}^{N}_+  = \{ \bm{X} \in \mathbb{S}^{N} \, |\,   \bm{X} \succeq 0  \} $ denotes the  positive  semidefinite cone.

		Then, 
		\begin{equation}\label{closed}
		\mathcal{S}^{-1}\circ\Pi_{\mathbb{S}^{N}_+ }\circ\mathcal{S} \, (\bm{V})  = \underset{\bm{Z}}{\text{argmin}} \,\, \delta_{\mathbb{S}^{N}_+ } (\bm{Z}) +  \frac{1}{2}\Vert \bm{Z} - \bm{V}\Vert^2_{\mathcal{M}}.
		\end{equation}
	\end{prop}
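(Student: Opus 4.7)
The plan is to reduce the proximal problem in the $\mathcal{M}$-metric to an ordinary Euclidean projection onto $\mathbb{S}^{N}_+$ by a linear change of variables, where the definiteness-invariant condition is exactly what makes the feasible set survive the change of variables.

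First I would rewrite the objective using the decomposition $\langle\cdot,\mathcal{M}\,\cdot\rangle = \langle\mathcal{S}\,\cdot,\mathcal{S}\,\cdot\rangle$, so that
\begin{equation*}
\tfrac{1}{2}\Vert \bm{Z}-\bm{V}\Vert^{2}_{\mathcal{M}} \;=\; \tfrac{1}{2}\Vert \mathcal{S}\bm{Z}-\mathcal{S}\bm{V}\Vert^{2}.
\end{equation*}
Since $\mathcal{M}\succ 0$ forces $\mathcal{S}$ to be invertible, I would then introduce the substitution $\bm{W}\eqdef\mathcal{S}\bm{Z}$, equivalently $\bm{Z} = \mathcal{S}^{-1}\bm{W}$, mapping the optimization variable bijectively.

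Next I would transport the constraint $\bm{Z}\in\mathbb{S}^{N}_+$ through this change of variables. This is the single step where the hypothesis is used: by \eqref{de_inv}, $\mathcal{S}(\bm{Z})\in\mathbb{S}^{N}_+ \Leftrightarrow \bm{Z}\in\mathbb{S}^{N}_+$, so the indicator $\delta_{\mathbb{S}^{N}_+}(\bm{Z})$ becomes $\delta_{\mathbb{S}^{N}_+}(\bm{W})$ after the substitution. The transformed problem is therefore
\begin{equation*}
\underset{\bm{W}}{\text{minimize}} \;\; \delta_{\mathbb{S}^{N}_+}(\bm{W}) + \tfrac{1}{2}\Vert \bm{W}-\mathcal{S}\bm{V}\Vert^{2},
\end{equation*}
which is the \emph{Euclidean} projection of $\mathcal{S}\bm{V}$ onto the PSD cone, with unique solution $\bm{W}^{\star}=\Pi_{\mathbb{S}^{N}_+}(\mathcal{S}\bm{V})$. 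Inverting the substitution yields $\bm{Z}^{\star} = \mathcal{S}^{-1}\Pi_{\mathbb{S}^{N}_+}\mathcal{S}(\bm{V})$, proving \eqref{closed}.

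The main obstacle is really just conceptual rather than technical: one has to be careful that the substitution $\bm{W}=\mathcal{S}\bm{Z}$ is well-defined on $\mathbb{S}^{N}$ (in particular that $\mathcal{S}$ maps symmetric matrices to symmetric matrices so that restricting to $\mathbb{S}^{N}_+$ is meaningful), and that the equivalence in \eqref{de_inv} is used in \emph{both} directions, since we need both that feasible $\bm{Z}$ map to feasible $\bm{W}$ and that every feasible $\bm{W}$ corresponds to a feasible $\bm{Z}$; otherwise the minimum values of the two problems could differ. Once this symmetry of the hypothesis is invoked, nothing else beyond the standard strong convexity of the Euclidean projection (giving existence and uniqueness of $\bm{W}^{\star}$) is needed.
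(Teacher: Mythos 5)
Your proposal is correct and follows essentially the same route as the paper's proof: rewrite the $\mathcal{M}$-norm via $\mathcal{S}$, change variables to $\widetilde{\bm{Z}}=\mathcal{S}\bm{Z}$, and use the invariance condition \eqref{de_inv} to replace $\delta_{\mathbb{S}^{N}_+}(\mathcal{S}^{-1}\widetilde{\bm{Z}})$ by $\delta_{\mathbb{S}^{N}_+}(\widetilde{\bm{Z}})$, reducing the problem to a Euclidean projection. Your additional remarks on needing both directions of the equivalence and on $\mathcal{S}$ preserving symmetry are sound and only make the argument more careful than the paper's.
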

	\begin{proof}
		We   aim  to prove relation \eqref{closed}. From its right-hand  side,  we  obtain
		\begin{align}\label{to_cont}
		\,\underset{\bm{Z}}{\text{argmin}} \,\, \delta_{\mathbb{S}^{N}_+ } (\bm{Z}) +  \frac{1}{2}\Vert \bm{Z} - \bm{V}\Vert^2_{\mathcal{M}}	
		=& \,\underset{\bm{Z}}{\text{argmin}} \,\, \delta_{\mathbb{S}^{N}_+ } (\bm{Z}) +  \frac{1}{2}\Vert \mathcal{S}\bm{Z} - \mathcal{S}\bm{V}\Vert^2,	\nonumber\\			
		=& \,\mathcal{S}^{-1}\, \underset{\widetilde{\bm{Z}}}{\text{argmin}} \,\, \delta_{\mathbb{S}^{N}_+ } (\mathcal{S}^{-1}\widetilde{\bm{Z}}) +  \frac{1}{2}\Vert \widetilde{\bm{Z}} - \mathcal{S}\bm{V}\Vert^2,
		\end{align}
		with $  \widetilde{\bm{Z}}= \mathcal{S}\bm{Z} $. By \eqref{de_inv}, we have
		\begin{equation}
		\widetilde{\bm{Z}}\in \mathbb{S}^{N}_+  \,\,\Longleftrightarrow\,\,  \mathcal{S}^{-1}(\widetilde{\bm{Z}}) \in \mathbb{S}^{N}_+. 
		\end{equation}
		It follows that
		\begin{equation}
		\delta_{\mathbb{S}^{N}_+ } (\mathcal{S}^{-1}\widetilde{\bm{Z}}) = \begin{cases}
		0  &  \widetilde{\bm{Z}} \in \mathbb{S}^{N}_+, \\
		+\infty & \text{otherwise},
		\end{cases} 
		\end{equation}
		which reduces to $ \delta_{\mathbb{S}^{N}_+ } (\widetilde{\bm{Z}}) $. Equation \eqref{to_cont} can therefore be written as
		\begin{equation}
		\mathcal{S}^{-1}\underset{\widetilde{\bm{Z}}}{\text{argmin}} \,\, \delta_{\mathbb{S}^{N}_+ } (\widetilde{\bm{Z}}) +  \frac{1}{2}\Vert \widetilde{\bm{Z}} - \mathcal{S}\bm{V}\Vert^2\nonumber\\
		= \,\mathcal{S}^{-1}\circ\Pi_{\mathbb{S}^{N}_+ }\circ\mathcal{S} \,(\bm{V}).
		\end{equation}	
		The proof is now concluded.
	\end{proof}


	\subsection{Construction: feasible choices}
	Here, we aim  to construct metrics such  that \eqref{de_inv} holds. 	
	Consider the following construction strategy:	
	\begin{prop}\label{S_def_compact}
		Given $ \gamma_1,\gamma_2>0 $ and integer $ K $ from $ \{ 1, 2,  \dots, N-1 \} $. Let the variable  metric $  \mathcal{M} $ be defined as
		\begin{equation}\label{stan_a}
		\mathcal{M} \,(\bm{V}) \eqdef  
		\left[\begin{array}{cc} 
		\frac{\gamma_1}{\gamma_2}\,\bm{1}_1 &   \qquad\gamma_1\,\, \bm{1}_0   
		\\ \,\gamma_1\, \bm{1}_0 &  \,\quad\gamma_1\gamma_2 \,\, \bm{1}_2  \end{array}\right] \, \odot\, \bm{V}, \tag{construction}
		\end{equation}
		with $ \bm{1}_1 \in  \mathbb{S}^K $, $ \bm{1}_0 \in  \mathbb{R}^{K\times  (N-K)} $,  $ \bm{1}_2 \in  \mathbb{S}^{N-K }$,
		where by $ \bm{1} $ we denote the ones matrix (i.e., all  entries being $ 1 $), by  $ \odot $  the  element-wise multiplication.
	\end{prop}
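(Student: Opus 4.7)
The plan is to exploit a rank-one structure hidden inside the block entrywise multiplier in \eqref{stan_a}. Denoting this multiplier by $\bm{M}$, I first want to observe that $\bm{M} = \bm{m}\bm{m}^T$, where $\bm{m}\in\mathbb{R}^N$ has its first $K$ entries equal to $\sqrt{\gamma_1/\gamma_2}$ and its remaining $N-K$ entries equal to $\sqrt{\gamma_1\gamma_2}$. The two diagonal blocks of $\bm{m}\bm{m}^T$ match $\gamma_1/\gamma_2$ and $\gamma_1\gamma_2$ directly, while the cross term $\sqrt{\gamma_1/\gamma_2}\cdot\sqrt{\gamma_1\gamma_2}=\gamma_1$ recovers the off-diagonal blocks. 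Taking entrywise square roots then produces a second rank-one matrix $\bm{S} = \bm{s}\bm{s}^T$, with $\bm{s}$ being the entrywise square root of $\bm{m}$.

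Next I would use this factorisation to identify the decomposed metric. Setting $\mathcal{S}(\bm{V}) \eqdef \bm{S}\odot\bm{V}$, a direct expansion gives
\[
\langle \bm{V},\,\mathcal{M}\bm{V}\rangle \,=\, \sum_{i,j} M_{ij}\, V_{ij}^2 \,=\, \sum_{i,j} S_{ij}^2\, V_{ij}^2 \,=\, \langle \mathcal{S}\bm{V},\,\mathcal{S}\bm{V}\rangle ,
\]
so this $\mathcal{S}$ serves as the square root of $\mathcal{M}$ required by Prop.~\ref{pro_clo}. Since $\gamma_1,\gamma_2>0$ forces every entry of $\bm{M}$ to be strictly positive, the quadratic form above vanishes only at $\bm{V}=\bm{0}$, which simultaneously certifies $\mathcal{M}\succ 0$ and hence that \eqref{stan_a} is a legitimate metric.

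The invariance \eqref{de_inv} then follows from a standard identity for rank-one Hadamard multipliers: letting $\bm{D} = \mathrm{diag}(\bm{s})$,
\[
(\bm{S}\odot\bm{V})_{ij} \,=\, s_i\, s_j\, V_{ij} \,=\, (\bm{D}\bm{V}\bm{D})_{ij} ,
\]
so $\mathcal{S}(\bm{V}) = \bm{D}\bm{V}\bm{D}$ is a congruence by the strictly positive diagonal matrix $\bm{D}$. Since $\bm{D}$ is invertible and positive definite, this congruence preserves definiteness in both directions, giving $\bm{D}\bm{V}\bm{D}\succeq 0 \iff \bm{V}\succeq 0$, which is exactly \eqref{de_inv}.

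I do not expect a real obstacle here: once the rank-one factorisation of $\bm{M}$ is spotted, the rest reduces to one-line identities. The only point genuinely worth double-checking is why the particular entry values in \eqref{stan_a} are the ``right'' ones --- it is the relation $\sqrt{\gamma_1/\gamma_2}\cdot\sqrt{\gamma_1\gamma_2}=\gamma_1$ that ties the off-diagonal scaling to the geometric mean of the diagonal scalings, and any perturbation breaking this would destroy both the rank-one structure and the congruence interpretation. This also clarifies the paper's earlier allusion to Schur-complement reasoning, since an alternative proof would block-partition $\bm{V}$ and verify directly that the Schur complement $\bm{V}_{22}-\bm{V}_{12}^T\bm{V}_{11}^{-1}\bm{V}_{12}$ is preserved (up to a positive scalar) under the scaled multiplier.
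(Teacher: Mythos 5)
Your proof is correct, but it proves the claim (that the construction \eqref{stan_a} satisfies \eqref{de_inv}) by a genuinely different route from the paper. The paper invokes the generalized Schur complement: it characterizes positive semidefiniteness of the scaled block matrix by the three conditions $\sqrt{\gamma_1/\gamma_2}\,\bm{X}_1\succeq 0$, $\sqrt{\gamma_1\gamma_2}\,(\bm{X}_2-\bm{X}_0^T\bm{X}_1^\dagger\bm{X}_0)\succeq 0$, and $\sqrt{\gamma_1}\,(\bm{I}-\bm{X}_1\bm{X}_1^\dagger)\bm{X}_0=0$, and observes that each positive scalar factor can be dropped, so the scaled and unscaled conditions coincide. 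Your argument instead spots that the block multiplier is the rank-one matrix $\bm{m}\bm{m}^T$, so that $\mathcal{S}(\bm{V})=\bm{S}\odot\bm{V}=\bm{D}\bm{V}\bm{D}$ is a congruence by the positive diagonal matrix $\bm{D}=\mathrm{diag}(\bm{s})$, and definiteness invariance is immediate from invertibility of $\bm{D}$. Your route is more elementary (no pseudo-inverses, no case analysis on the range condition) and buys more: it exhibits the decomposed metric $\mathcal{S}$ explicitly, verifies $\langle\bm{V},\mathcal{M}\bm{V}\rangle=\Vert\mathcal{S}\bm{V}\Vert^2$ and $\mathcal{M}\succ 0$ (which the paper leaves implicit), and makes transparent why the off-diagonal weight must be the geometric mean $\sqrt{(\gamma_1/\gamma_2)\cdot\gamma_1\gamma_2}=\gamma_1$ --- the same identity the paper's Schur-complement bookkeeping silently relies on when the scalings of the three conditions are matched. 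The congruence view also explains directly why $\mathcal{S}^{-1}\Pi_{\mathbb{S}^N_+}\mathcal{S}$ maps into $\mathbb{S}^N_+$, which is the point of Prop.~\ref{pro_clo}.
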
	
For the above metrics, we can prove that condition \eqref{de_inv} always holds, and consequently a closed-form evaluation result  obtained. 
	\begin{prop}
		Let metric $  \mathcal{M} $ be defined as in \eqref{stan_a}.
		Then, condition \eqref{de_inv} holds.
	\end{prop}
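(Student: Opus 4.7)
The plan is to exhibit an explicit factorization $\mathcal{M} = \mathcal{S}\circ\mathcal{S}$ in which $\mathcal{S}$ acts on $\mathbb{S}^{N}$ as a scaled congruence by a positive diagonal matrix; the invariance \eqref{de_inv} then follows at once from the fact that invertible congruences preserve positive semidefiniteness.

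The natural choice for $\mathcal{S}$ is Hadamard multiplication by the entrywise square root of the weight pattern in \eqref{stan_a}, namely
$$\mathcal{S}(\bm{V}) = \begin{bmatrix} \sqrt{\gamma_1/\gamma_2}\,\bm{1}_1 & \sqrt{\gamma_1}\,\bm{1}_0 \\ \sqrt{\gamma_1}\,\bm{1}_0 & \sqrt{\gamma_1\gamma_2}\,\bm{1}_2 \end{bmatrix}\odot \bm{V}.$$
A direct entrywise check gives $\mathcal{S}\circ\mathcal{S}=\mathcal{M}$. Since Hadamard multiplication by a symmetric pattern is self-adjoint with respect to the Frobenius inner product on $\mathbb{S}^{N}$, this supplies a legitimate decomposition $\langle\bm{U},\mathcal{M}\bm{V}\rangle=\langle\mathcal{S}\bm{U},\mathcal{S}\bm{V}\rangle$ in the sense required by Prop.~\ref{pro_clo}.

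The key step is to recognize that the exponents in \eqref{stan_a} have been tuned precisely so that $\mathcal{S}$ is a congruence. Set $\bm{D}=\mathrm{diag}(\gamma_2^{-1/4}\bm{I}_K,\,\gamma_2^{1/4}\bm{I}_{N-K})$. A block multiplication shows that $\sqrt{\gamma_1}\,\bm{D}\bm{V}\bm{D}$ scales the $(1,1)$-block of $\bm{V}$ by $\sqrt{\gamma_1}\cdot\gamma_2^{-1/2}=\sqrt{\gamma_1/\gamma_2}$, the $(2,2)$-block by $\sqrt{\gamma_1}\cdot\gamma_2^{1/2}=\sqrt{\gamma_1\gamma_2}$, and the off-diagonal blocks by $\sqrt{\gamma_1}\cdot\gamma_2^{-1/4}\cdot\gamma_2^{1/4}=\sqrt{\gamma_1}$, matching $\mathcal{S}(\bm{V})$ in every block. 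Thus
$$\mathcal{S}(\bm{V}) = \sqrt{\gamma_1}\,\bm{D}\bm{V}\bm{D}.$$

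Since $\bm{D}\succ 0$ is invertible and $\sqrt{\gamma_1}>0$, for any symmetric $\bm{Z}$ and any $\bm{x}\in\mathbb{R}^{N}$ we have $\bm{x}^{T}\mathcal{S}(\bm{Z})\bm{x}=\sqrt{\gamma_1}\,(\bm{D}\bm{x})^{T}\bm{Z}(\bm{D}\bm{x})$, and the bijection $\bm{x}\mapsto\bm{D}\bm{x}$ on $\mathbb{R}^{N}$ makes the sign of the left-hand quadratic form coincide with that of $\bm{y}^{T}\bm{Z}\bm{y}$ as $\bm{y}$ ranges over $\mathbb{R}^{N}$. This delivers $\mathcal{S}(\bm{Z})\succeq 0 \Leftrightarrow \bm{Z}\succeq 0$, which is exactly \eqref{de_inv}. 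I expect the only non-mechanical step to be spotting the congruence hidden in the Hadamard pattern; once the diagonal $\bm{D}$ with exponents $\pm 1/4$ is identified, the remainder is a one-line application of Sylvester's law of inertia.
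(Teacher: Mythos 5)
Your proof is correct, but it takes a genuinely different route from the paper. The paper partitions $\bm{X}$ into blocks and invokes the generalized Schur complement characterization of positive semidefiniteness (with pseudo-inverses and the range condition $(\bm{I}-\bm{X}_1\bm{X}_1^\dagger)\bm{X}_0=0$), observing that each of the three resulting conditions is unaffected by multiplication with a positive scalar; this requires silently verifying that the Schur complement of the scaled matrix factors as $\sqrt{\gamma_1\gamma_2}\,(\bm{X}_2-\bm{X}_0^T\bm{X}_1^\dagger\bm{X}_0)$. You instead observe that the entrywise square-root pattern of \eqref{stan_a} is the rank-one pattern $\sqrt{\gamma_1}\,\bm{d}\bm{d}^T$ with $\bm{d}=(\gamma_2^{-1/4},\dots,\gamma_2^{-1/4},\gamma_2^{1/4},\dots,\gamma_2^{1/4})$, so that $\mathcal{S}(\bm{V})=\sqrt{\gamma_1}\,\bm{D}\bm{V}\bm{D}$ is a positive multiple of a congruence by the invertible diagonal matrix $\bm{D}$, and \eqref{de_inv} follows from the elementary invariance of the PSD cone under invertible congruence. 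Your block computation checks out ($d_1^2=\gamma_2^{-1/2}$, $d_2^2=\gamma_2^{1/2}$, $d_1d_2=1$, matching all three blocks after multiplying by $\sqrt{\gamma_1}$), and your verification that this $\mathcal{S}$ is self-adjoint with $\mathcal{S}\circ\mathcal{S}=\mathcal{M}$ correctly ties it to the decomposition required by Prop.~\ref{pro_clo}. Your argument is shorter and avoids the generalized Schur complement machinery entirely; it also exposes why the construction works and immediately generalizes: Hadamard multiplication by any $\bm{d}\bm{d}^T$ with $\bm{d}>0$ entrywise satisfies \eqref{de_inv}, of which \eqref{stan_a} is the two-level special case. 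The paper's route, by contrast, stays within the Schur-complement toolkit it uses elsewhere, which keeps the exposition self-consistent but is block-structure-specific.
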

	\begin{proof}
	Given an arbitrary symmetric matrix $ \bm{X} \in \mathbb{S}^N $, it can be partitioned into
		\begin{equation}
		\bm{X} = \left[\begin{array}{cc}\bm{X}_1 &   \bm{X}_0  \\ \bm{X}_0^T  &  \bm{X}_2 \end{array}\right],
		\end{equation}
		with $ \bm{X}_1 \in  \mathbb{S}^K $, $ \bm{X}_0 \in  \mathbb{R}^{K\times  (N-K)} $,  $ \bm{X}_2 \in  \mathbb{S}^{N-K }$, where  $ K $ is an arbitrary integer from $ \{ 1, 2,  \dots, N-1 \} $.
		By  the generalized  \textit{Schur complement} argument, see \cite[A.5.5]{boyd2004convex}, we arrive at
		\begin{equation}\label{Schur_complement} 
		\left[\begin{array}{cc} \sqrt{\frac{\gamma_1}{\gamma_2}}\, \bm{X}_1 &  \quad \sqrt{\gamma_1}\,\bm{X}_0  \\ \:\sqrt{\gamma_1}\,\bm{X}_0^T  &  \,\sqrt{\gamma_1\gamma_2}\,\, \bm{X}_2 \end{array}\right]
		\succeq 0 \Longleftrightarrow 
		\sqrt{\frac{\gamma_1}{\gamma_2}}\bm{X}_{1}  \succeq 0,\:\,   \sqrt{\gamma_1\gamma_2}(\bm{X}_{2}-\bm{X}_{0}^\text{T}\bm{X}_{1}^\dagger \bm{X}_{0}) \succeq 0,\:\,  \sqrt{\gamma_1}(\bm{I}-\bm{X}_{1}\bm{X}_{1}^\dagger)\bm{X}_{0}=0,
		\end{equation}
		where $\cdot^\dagger $ denotes the pseudo-inverse. 	
		In  view  of the  right-hand side above,   we note that \\
		
		$\bullet$\,(i) $ \sqrt{\frac{\gamma_1}{\gamma_2}}\,\,\bm{X}_{1} \succeq 0     \,\iff\,   \bm{X}_{1} \succeq 0$; \\
		
		$\bullet$\,(ii) $ \sqrt{\gamma_1\gamma_2} \,\big(\bm{X}_{2}-\bm{X}_{0}^\text{T}\bm{X}_{1}^\dagger \bm{X}_{0}\big) \succeq 0 \,\iff\,   \bm{X}_{2}-\bm{X}_{0}^\text{T}\bm{X}_{1}^\dagger \bm{X}_{0} \succeq 0$;\\
		
		$\bullet$\,(iii) $ \sqrt{\gamma_1}\,\big(\bm{I}-\bm{X}_{1}\bm{X}_{1}^\dagger\big)\bm{X}_{0}=0 \,\iff\,    \big(\bm{I}-\bm{X}_{1}\bm{X}_{1}^\dagger\big)\bm{X}_{0}=0$. \\
		
		Combing  the  above 3  relations, we obtain
		 \begin{equation}\label{cond2}
		 \mathcal{S}(\bm{X}) \succeq 0 \,\iff\, \bm{X} \succeq 0,
		 \end{equation}
		which coincides with condition  \eqref{de_inv}. The proof is  therefore concluded.
	\end{proof}

	\begin{remark}
		A feasible metric or preconditioner that satisfies \eqref{de_inv} is  first  found  by Yifan  Ran et.  al.  in \cite[sec.  3.1]{yifan_icassp}. However, due to the lack of  an optimal choice, it did not draw interest at the time.
	\end{remark}

	\subsection{Closed-form  metric solver}
	Given any variable metric  $ \mathcal{M}   =  \mathcal{S}^*\mathcal{S} $ defined via  \eqref{stan_a}. The  SDP  \eqref{sdp_uni} can be solved by 
	 the following  closed-from iterates:
	\begin{align}
	\bm{X}^{k+1} =\,\,& \underset{\bm{X}}{\text{argmin}} \,\,\,	f(\bm{X})  + \frac{1}{2}\Vert\mathcal{A}\bm{X} - \bm{Z}^k  + \mathcal{M}^{-1}\bm{\Lambda}^k\Vert^2_\mathcal{M} , \nonumber\\				
	\bm{Z}^{k+1} =\,\, &   	\mathcal{S}^{-1}\Pi_{\mathbb{S}^{N}_+}\mathcal{S}   \bigg(\mathcal{A}\bm{X}^{k+1} + \mathcal{M}^{-1}\bm{\Lambda}^{k}  \bigg)    ,\nonumber\\
	\bm{\Lambda}^{k+1} =\,\,  &\bm{\Lambda}^{k} + \mathcal{M}  \bigg( \mathcal{A}\bm{X}^{k+1} - \bm{Z}^{k+1} \bigg).	 \tag{metric solver}
	\end{align}	

Moreover, we may simplify the above iterates by employing scaled variables.
	\begin{align}\label{sdp_scaled}
\bm{X}^{k+1} =\,\,& \underset{\bm{X}}{\text{argmin}} \,\,\,	f(\bm{X})  + \frac{1}{2}\Vert\mathcal{S}\mathcal{A}\bm{X} - \widetilde{\bm Z}^k  + \widetilde{\bm \Lambda}^k\Vert^2 , \nonumber\\				
\widetilde{\bm Z}^{k+1} =\,\, &   	\Pi_{\mathbb{S}^{N}_+}\, \bigg( \widetilde{\bm X}^{k+1} + \widetilde{\bm{\Lambda}}^{k}  \bigg) ,\nonumber\\
\widetilde{\bm\Lambda}^{k+1} =\,\,  & \widetilde{\bm\Lambda}^{k} +  \widetilde{\bm X}^{k+1} - \widetilde{\bm  Z}^{k+1} ,	   \tag{scaled form}
\end{align}	
with the following variable  substitutions:
\begin{equation}
\widetilde{\bm X}^{k} = \mathcal{S}\mathcal{A}\bm{X}^{k}, \quad
 \widetilde{\bm Z}^{k} =  \mathcal{S}\bm{Z}^{k}, \quad \widetilde{\bm{\Lambda}}^{k} = (\mathcal{S}^*)^{-1}\bm{\Lambda}^{k}.
\end{equation}
It may worth noticing a nice property of \eqref{sdp_scaled},
that   $ \bm{X} $-update is not scaled. That said, the final solution $ \bm{X}^\star $ is exactly the problem solution, and there is no need to `scale back'.


	\section{Optimal metric selection }
Here,  we  determine the optimal choice of the variable metric, corresponding to challenge (ii) in Sec. \ref{sec_chall}.  The general optimal  metric selection rule  is most recently established by   Yifan Ran \cite{ran2023equilibrate},  based on optimizing the following convergence rate bound:	
\begin{lem}\cite[Theorem 1]{ryu2022large} \label{lem_rate}
	Assume $\mathcal{F}: \mathbb{R}^n \rightarrow \mathbb{R}^n $ is $\theta$-averaged with $ \theta \in\, ]0,1[ $ and  $\text{Fix}\,\, \mathcal{F} \neq \emptyset $. Then, $ \bm{\zeta}^{k+1} = \mathcal{F}{\bm{\zeta}^{k}} $ with any starting point $ \bm{\zeta}^0 \in \mathbb{R}^n $ converges to one fixed-point, i.e., 
	\begin{equation}
	\bm{\zeta}^k \rightarrow \bm{\zeta}^\star
	\end{equation}
	for some $ \bm{\zeta}^\star \in \text{Fix}\,\, \mathcal{F}$. The quantities $ \text{dist}\,\, (\bm{\zeta}^k , \text{Fix}\,\, \mathcal{F}) $, $ \Vert \bm{\zeta}^{k+1} - \bm{\zeta}^k  \Vert $, and $ \Vert \bm{\zeta}^k  - \bm{\zeta}^\star \Vert $ for any $ \bm{\zeta}^\star \in \text{Fix}\,\, \mathcal{F}$ are monotonically non-increasing with $ k $. Finally, we have
	\begin{equation}
	\text{dist}\,\, (\bm{\zeta}^k , \text{Fix}\,\, \mathcal{F}) \rightarrow 0,
	\end{equation}
	and
	\begin{equation}\label{rate}
	\Vert \bm{\zeta}^{k+1} - \bm{\zeta}^k  \Vert^2 \leq \frac{\theta}{(k+1)(1-\theta)}	\text{dist}^2\,\, (\bm{\zeta}^0 , \text{Fix}\,\, \mathcal{F}).
	\end{equation}
\end{lem}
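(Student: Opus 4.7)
The plan is to leverage the defining characterization of a $\theta$-averaged operator. I would first recall that $\mathcal{F}$ being $\theta$-averaged is equivalent to $\mathcal{F} = (1-\theta)\mathcal{I} + \theta\mathcal{R}$ for some nonexpansive $\mathcal{R}$, from which, after expanding the square and applying nonexpansiveness of $\mathcal{R}$, one obtains the key inequality
\begin{equation*}
\Vert \mathcal{F}\bm{x} - \mathcal{F}\bm{y}\Vert^2 \leq \Vert \bm{x} - \bm{y}\Vert^2 - \tfrac{1-\theta}{\theta}\Vert (\mathcal{I}-\mathcal{F})\bm{x} - (\mathcal{I}-\mathcal{F})\bm{y}\Vert^2.
\end{equation*}
Setting $\bm{x} = \bm{\zeta}^k$ and $\bm{y} = \bm{\zeta}^\star \in \text{Fix}\,\mathcal{F}$ then yields the quasi-Fej\'er inequality
\begin{equation*}
\Vert \bm{\zeta}^{k+1} - \bm{\zeta}^\star\Vert^2 \leq \Vert \bm{\zeta}^k - \bm{\zeta}^\star\Vert^2 - \tfrac{1-\theta}{\theta}\Vert \bm{\zeta}^{k+1} - \bm{\zeta}^k\Vert^2,
\end{equation*}
which immediately gives the monotone non-increase of $\Vert \bm{\zeta}^k - \bm{\zeta}^\star\Vert$, and upon infimizing over $\bm{\zeta}^\star$, monotonicity of $\text{dist}\,(\bm{\zeta}^k, \text{Fix}\,\mathcal{F})$ as well.

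Next I would telescope this inequality over $i = 0, \dots, k$ to obtain
\begin{equation*}
\sum_{i=0}^{k}\Vert \bm{\zeta}^{i+1} - \bm{\zeta}^i\Vert^2 \leq \tfrac{\theta}{1-\theta}\Vert \bm{\zeta}^0 - \bm{\zeta}^\star\Vert^2,
\end{equation*}
so the squared increments are summable. To upgrade summability into the last-iterate $O(1/k)$ bound \eqref{rate}, I would use that any $\theta$-averaged map with $\theta<1$ is in particular nonexpansive, giving
\begin{equation*}
\Vert \bm{\zeta}^{k+2} - \bm{\zeta}^{k+1}\Vert = \Vert \mathcal{F}\bm{\zeta}^{k+1} - \mathcal{F}\bm{\zeta}^k\Vert \leq \Vert \bm{\zeta}^{k+1} - \bm{\zeta}^k\Vert,
\end{equation*}
so that $\Vert \bm{\zeta}^{k+1} - \bm{\zeta}^k\Vert$ is itself monotone non-increasing. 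Combining this monotonicity with the telescoped bound gives $(k+1)\Vert \bm{\zeta}^{k+1} - \bm{\zeta}^k\Vert^2 \leq \sum_{i=0}^{k}\Vert \bm{\zeta}^{i+1} - \bm{\zeta}^i\Vert^2$, after which taking the infimum over $\bm{\zeta}^\star \in \text{Fix}\,\mathcal{F}$ on the right-hand side delivers \eqref{rate}.

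For the qualitative statement $\bm{\zeta}^k \to \bm{\zeta}^\star$ and $\text{dist}\,(\bm{\zeta}^k, \text{Fix}\,\mathcal{F})\to 0$, I would run a standard Opial-type argument in $\mathbb{R}^n$: Fej\'er monotonicity keeps the iterates bounded, so some subsequence converges to a cluster point $\bar{\bm{\zeta}}$; since $\Vert \bm{\zeta}^{k+1} - \bm{\zeta}^k\Vert \to 0$ and $\mathcal{F}$ is continuous (being nonexpansive), the limit satisfies $\mathcal{F}\bar{\bm{\zeta}} = \bar{\bm{\zeta}}$; and applying Fej\'er monotonicity with respect to two such cluster points forces uniqueness of the limit, yielding $\bm{\zeta}^k \to \bm{\zeta}^\star \in \text{Fix}\,\mathcal{F}$, from which $\text{dist}\,(\bm{\zeta}^k, \text{Fix}\,\mathcal{F})\to 0$ is automatic. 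I expect the main subtlety to lie in sequencing the monotonicity of the increments \emph{before} infimizing over $\text{Fix}\,\mathcal{F}$: without first establishing that $\Vert \bm{\zeta}^{k+1} - \bm{\zeta}^k\Vert$ is non-increasing, a naive telescoping produces only an $O(1/k)$ bound on the \emph{average} of the squared increments rather than on the last one, which is what \eqref{rate} actually asserts.
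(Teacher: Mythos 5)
Your proof is correct and complete: the averagedness inequality, telescoping, monotonicity of the increments via nonexpansiveness (which is indeed the step needed to convert the average-iterate bound into the last-iterate bound \eqref{rate}), and the Opial/Fej\'er argument for convergence are all in order. Note that the paper does not prove this lemma at all --- it imports it by citation from \cite[Theorem 1]{ryu2022large} --- and your argument is essentially the standard proof given in that reference.
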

Intuitively,  one can directly  optimize the above upper bound  by simply substituting the ADMM fixed-point.
One expression (a dual view) is at least trace back to 1989 by Eckstein \cite{eckstein1989splitting}.
 However, not too  surprisingly,  a direct substitution will not work (otherwise not  a long-standing open issue). 
As pointed out in  \cite{ran2023equilibrate}, there exists an implicit, parameter-related scaling in the upper bound, introduced by the classical way of parametrization.
It is only optimizable after removing the extra scaling. This leads  to the following result: 
	\begin{coro}[SDP choice]
	For the ADMM algorithm solving \eqref{sdp_uni}, the  optimal choice of  (positive definite)  metric $ \mathcal M   =  \mathcal S^* \mathcal S  $ can  be found via
	\begin{align}\label{gen}
	\underset{\mathcal S  }{\text{minimize}}\,\,  	 \Vert \mathcal{S}\mathcal{A}\bm{X}^\star\Vert^2  +  \Vert (\mathcal{S}^*)^{-1} \bm{\Lambda}^\star    \Vert^2 - 2 \langle\mathcal{S}\mathcal{A}\bm{X}^\star, \bm{\zeta}^0 \rangle -    2 \langle(\mathcal{S}^*)^{-1} \bm{\Lambda}^\star,  \bm{\zeta}^0 \rangle,
	\end{align}
 where $  \bm{\zeta}^0 = \mathcal{A}\bm{X}^0  + \bm{\Lambda}^0 $   is an arbitrary  initialization.	
	\end{coro}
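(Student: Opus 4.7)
The overall strategy is to instantiate Lemma \ref{lem_rate} for the ADMM iteration viewed as a fixed-point operator, and then minimize (over $\mathcal S$) the right-hand side quantity $\text{dist}^2(\bm\zeta^0,\text{Fix}\,\mathcal F)$. The essential twist, following \cite{ran2023equilibrate}, is that one must work with the \emph{scaled form} \eqref{sdp_scaled} rather than the original metric iterates. In the scaled form the ambient inner product is the standard Euclidean one; the parameter $\mathcal S$ only enters through the variables themselves, not through the norm. This removes the implicit $\mathcal S$-dependent rescaling of the bound that the paper flags as the reason a direct substitution fails.

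First I would identify the fixed-point object. In the scaled iterates, the natural DRS/ADMM-type fixed-point variable is $\bm\zeta^k = \widetilde{\bm X}^k + \widetilde{\bm\Lambda}^k$. Since $\widetilde{\bm X}^k = \mathcal S\mathcal A\bm X^k$ and $\widetilde{\bm\Lambda}^k = (\mathcal S^*)^{-1}\bm\Lambda^k$, the equilibrium condition of \eqref{sdp_scaled} gives
\begin{equation*}
\bm\zeta^\star \,=\, \mathcal S\mathcal A\bm X^\star \,+\, (\mathcal S^*)^{-1}\bm\Lambda^\star,
\end{equation*}
where $(\bm X^\star,\bm\Lambda^\star)$ is a primal-dual solution of \eqref{sdp_uni} in the original coordinates. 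Since the fixed-point operator inherited from \eqref{sdp_scaled} is $\tfrac12$-averaged in the Euclidean sense (the scaled iterates are just standard ADMM applied to the transformed data), Lemma \ref{lem_rate} applies verbatim and its upper bound is driven by $\Vert \bm\zeta^0-\bm\zeta^\star\Vert^2$.

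Next I would expand this squared distance and separate the $\mathcal S$-dependent part from the constant part. Writing
\begin{equation*}
\Vert\bm\zeta^0-\bm\zeta^\star\Vert^2 \,=\, \Vert\bm\zeta^\star\Vert^2 - 2\langle\bm\zeta^\star,\bm\zeta^0\rangle + \Vert\bm\zeta^0\Vert^2,
\end{equation*}
the cross term in $\Vert\bm\zeta^\star\Vert^2$ collapses to $2\langle\mathcal S\mathcal A\bm X^\star,(\mathcal S^*)^{-1}\bm\Lambda^\star\rangle = 2\langle\mathcal A\bm X^\star,\bm\Lambda^\star\rangle$, which is independent of $\mathcal S$. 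Likewise $\Vert\bm\zeta^0\Vert^2$ is constant in $\mathcal S$ under the stated initialization convention. What remains genuinely depends on $\mathcal S$, namely
\begin{equation*}
\Vert\mathcal S\mathcal A\bm X^\star\Vert^2 + \Vert(\mathcal S^*)^{-1}\bm\Lambda^\star\Vert^2 - 2\langle\mathcal S\mathcal A\bm X^\star,\bm\zeta^0\rangle - 2\langle(\mathcal S^*)^{-1}\bm\Lambda^\star,\bm\zeta^0\rangle,
\end{equation*}
which is precisely the objective in \eqref{gen}. Dropping the $\mathcal S$-free constants therefore yields the stated criterion.

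The main obstacle, in my view, is not the expansion itself but the conceptual step of justifying why this particular distance is the correct one to optimize. One has to argue that passing through the scaled form \eqref{sdp_scaled} is legitimate -- i.e., that the $\theta$-averaged constant of the operator in the transformed coordinates does not itself absorb any hidden $\mathcal S$-dependence -- and that the fixed-point set of the scaled operator is nonempty exactly when the original problem has a solution. Both points are where \cite{ran2023equilibrate} supplies the heavy lifting, and I would invoke that machinery as a black box. The remaining algebra, once the correct functional of $\mathcal S$ is identified, is the straightforward inner-product expansion above.
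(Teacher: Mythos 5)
Your proposal is correct and follows essentially the route the paper itself takes: the paper offers no self-contained proof of this corollary, instead deferring to the general rule of \cite{ran2023equilibrate}, whose key idea is exactly what you reconstruct --- pass to the scaled iterates \eqref{sdp_scaled} so that the averaged fixed-point bound of Lemma \ref{lem_rate} carries no hidden $\mathcal S$-dependence, identify $\bm\zeta^\star = \mathcal S\mathcal A\bm X^\star + (\mathcal S^*)^{-1}\bm\Lambda^\star$, and expand $\Vert\bm\zeta^0-\bm\zeta^\star\Vert^2$, discarding the $\mathcal S$-free terms $\Vert\bm\zeta^0\Vert^2$ and $2\langle\mathcal A\bm X^\star,\bm\Lambda^\star\rangle$. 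Your explicit observation that the cross term collapses to an $\mathcal S$-independent quantity, and your caveat that $\bm\zeta^0$ must be read as a fixed vector in the scaled coordinates, are both consistent with the statement and supply detail the paper leaves implicit.
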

Throughout the rest of the paper,  we will limit our discussion to zero initialization $  \bm{\zeta}^0   =  0 $, since otherwise we cannot guarantee a closed-form optimal choice.

	\subsection{Structure I: optimal parameters}
	Given a specific  metric  definition, we can substitute it to  \eqref{gen} to find the optimal choice. Recall our definition in \eqref{stan_a}, we obtain the following result:
	\begin{prop}
		Let  metric $ \mathcal{M} $ be defined as in \eqref{stan_a}. For ADMM solving  SDP \eqref{sdp_uni}, the optimal choice of $ \mathcal{M} $ under zero initialization can be found by
		\begin{equation}\label{opt_pro_specific}
		\underset{ \gamma_1,\gamma_2>0 }{\text{minimize}}\quad
		\frac{\gamma_1}{\gamma_2}\Vert \bm{X}^\star_1 \Vert^2 +  \frac{\gamma_2}{\gamma_1}\Vert \bm{\Lambda}^\star_1 \Vert^2
		+ \gamma_1\gamma_2\Vert \bm{X}^\star_2 \Vert^2 + 
		\frac{1}{\gamma_1\gamma_2}\Vert \bm{\Lambda}^\star_2 \Vert^2 + 2\gamma_1\Vert \bm{X}^\star_0 \Vert^2 + \frac{2}{\gamma_1}\Vert \bm{\Lambda}^\star_0 \Vert^2, \tag{$ \mathcal{P}_1 $}
		\end{equation}
		where the following partition definitions are used: $ \bm{X} \eqdef \left[\begin{array}{cc} \bm{X}_1 & \bm{X}_0\\ \bm{X}_0^\text{T} & \bm{X}_2 \end{array}\right],  \,\, \bm{\Lambda} \eqdef \left[\begin{array}{cc} \bm{\Lambda}_1 & \bm{\Lambda}_0\\ \bm{\Lambda}_0^\text{T} & \bm{\Lambda}_2 \end{array}\right]$.	
	\end{prop}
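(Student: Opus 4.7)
The plan is to specialize the general optimal-metric criterion from the preceding Corollary to the block construction in \eqref{stan_a}. Under the zero-initialization assumption $\bm{\zeta}^0 = 0$, the two inner-product cross terms in \eqref{gen} drop out, so the objective collapses to
\begin{equation*}
\Vert \mathcal{S}\mathcal{A}\bm{X}^\star\Vert^2 + \Vert (\mathcal{S}^*)^{-1}\bm{\Lambda}^\star \Vert^2.
\end{equation*}
Since $\mathcal{A}\bm{X}^\star = \bm{Z}^\star$ lies in $\mathbb{S}^N$ at the primal-dual optimum, I can identify it with the symmetric block-partitioned matrix written in the proposition (and likewise for $\bm{\Lambda}^\star$).

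Next, I would read off the explicit form of the decomposed metric $\mathcal{S}$ directly from the Schur-complement expression \eqref{Schur_complement}: the operator $\mathcal{S}$ acts as block-wise Hadamard multiplication by the scalars $\sqrt{\gamma_1/\gamma_2}$, $\sqrt{\gamma_1}$, and $\sqrt{\gamma_1\gamma_2}$ on the $(1,1)$, off-diagonal, and $(2,2)$ blocks, respectively. It is straightforward to check that $\mathcal{S}^*\mathcal{S}$ indeed reproduces the Hadamard weight matrix in \eqref{stan_a}. Because element-wise multiplication by a symmetric pattern of positive scalars is self-adjoint on $\mathbb{S}^N$, we have $(\mathcal{S}^*)^{-1} = \mathcal{S}^{-1}$, which acts by the reciprocal scalings $\sqrt{\gamma_2/\gamma_1}$, $1/\sqrt{\gamma_1}$, and $1/\sqrt{\gamma_1\gamma_2}$.

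Finally, I would expand the two squared Frobenius norms block by block. The term $\Vert \mathcal{S}\mathcal{A}\bm{X}^\star\Vert^2$ contributes $(\gamma_1/\gamma_2)\Vert\bm{X}_1^\star\Vert^2 + \gamma_1\gamma_2\Vert\bm{X}_2^\star\Vert^2 + 2\gamma_1\Vert\bm{X}_0^\star\Vert^2$, where the factor of $2$ arises from summing the contributions of the $(1,2)$ and $(2,1)$ off-diagonal blocks, which carry the same Frobenius norm by symmetry. The term $\Vert \mathcal{S}^{-1}\bm{\Lambda}^\star\Vert^2$ contributes the analogous expression with reciprocal weights. Adding these two recovers \eqref{opt_pro_specific} exactly.

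The main obstacle I anticipate is a bookkeeping one rather than a conceptual one: correctly tracking the factor of $2$ attached to the off-diagonal blocks, and reconciling the primal/dual conventions for $\mathcal{A}$ (identity in the dual, the LMI operator in the primal) so that the partition written in the statement is read as a partition of the symmetric matrix $\mathcal{A}\bm{X}^\star = \bm{Z}^\star$. Once those conventions are pinned down, the derivation reduces to matching Hadamard weights against block Frobenius norms.
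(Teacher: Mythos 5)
Your proposal is correct and follows the same route as the paper's proof: set $\bm{\zeta}^0 = 0$ in \eqref{gen} to drop the cross terms, then substitute the block-Hadamard form of $\mathcal{S}$ from \eqref{stan_a} and expand the two Frobenius norms block by block. The paper states this in one line ("substituting the definition concludes the proof"), so your explicit identification of $\mathcal{S}$ with the square-root weights from \eqref{Schur_complement}, the self-adjointness giving $(\mathcal{S}^*)^{-1}=\mathcal{S}^{-1}$, and the factor of $2$ from the two off-diagonal blocks is simply a more careful writing of the same argument.
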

		\begin{proof}
			For $  \bm{\zeta}^0 =  0 $,  \eqref{gen} reduces to
			\begin{equation}
				\underset{\mathcal S  }{\text{minimize}}\,\,  	 \Vert \mathcal S\bm{X}^\star\Vert^2  +  \Vert (\mathcal{S}^*)^{-1} \bm{\Lambda}^\star    \Vert^2
			\end{equation} 
			Then, substituting the definition in \eqref{stan_a} concludes the  proof. 
		\end{proof}

	Next, we show that the optimal choice is unique.
	\begin{prop}\label{prop_blk}
		Problem \ref{opt_pro_specific} admits a unique solution pair $ (\gamma_1^\star,\gamma_2^\star) $, with  $ \gamma_2^\star $ being the unique positive real root of the following degree-4 polynomial:
		\begin{equation} \label{quartic}
		\gamma_2^4 \Vert \bm{X}^\star_2\Vert^2\Vert \bm{\Lambda}^\star_1 \Vert^2 + \gamma_2^3 (\Vert \bm{X}^\star_2\Vert^2\Vert \bm{\Lambda}^\star_0 \Vert^2 + \Vert \bm{X}^\star_0 \Vert^2\Vert \bm{\Lambda}^\star_1 \Vert^2) 
		-  \gamma_2 (\Vert \bm{\Lambda}^\star_2 \Vert^2\Vert \bm{X}^\star_0 \Vert^2 + \Vert \bm{\Lambda}^\star_0 \Vert^2 \Vert \bm{X}^\star_1 \Vert^2) - \Vert \bm{\Lambda}^\star_2 \Vert^2 \Vert \bm{X}^\star_1 \Vert^2 = 0, 
		\end{equation}
		and $ \gamma_1^\star $ being:
		\begin{equation}\label{rho1_closed}
		\gamma_1^\star  = \sqrt{\frac{ {\gamma_2^\star}\Vert\bm{\Lambda}^\star_1 \Vert^2 + \frac{1}{\gamma_2^\star}\Vert\bm{\Lambda}^\star_2 \Vert^2 + 2\Vert \bm{\Lambda}^\star_0 \Vert^2}{\frac{1}{\gamma_2^\star}\Vert\bm{X}^\star_1 \Vert^2 + \gamma_2^\star\Vert \bm{X}^\star_2\Vert^2 + 2\Vert \bm{X}^\star_0 \Vert^2}}.
		\end{equation}
	\end{prop}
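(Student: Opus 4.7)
My plan is to treat this as a straightforward critical-point computation in two variables, with existence/uniqueness handled by a change-of-variables that exposes convexity. Write $a_1=\|\bm X_1^\star\|^2,\, b_1=\|\bm\Lambda_1^\star\|^2,\, a_2=\|\bm X_2^\star\|^2,\, b_2=\|\bm\Lambda_2^\star\|^2,\, a_0=\|\bm X_0^\star\|^2,\, b_0=\|\bm\Lambda_0^\star\|^2$. Then the objective $F(\gamma_1,\gamma_2)$ of \eqref{opt_pro_specific} is a sum of six positive monomials $\gamma_1^{p}\gamma_2^{q}$ with exponent pairs $(1,-1),(-1,1),(1,1),(-1,-1),(1,0),(-1,0)$. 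The first step is existence and uniqueness: substitute $\gamma_1=e^{u},\gamma_2=e^{v}$, so each monomial becomes a convex exponential $e^{pu+qv}$. A nonnegative combination of convex exponentials is convex, and (assuming the natural non-degeneracy that at least one $\bm X$-block and at least one $\bm\Lambda$-block is nonzero) the six exponent vectors span $\mathbb{R}^2$, so $F$ is strictly convex and coercive in $(u,v)$. Hence a unique interior minimizer $(\gamma_1^\star,\gamma_2^\star)\in(0,\infty)^2$ exists, and is characterized by $\nabla F=0$.

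The second step is to obtain \eqref{rho1_closed}. Set $\partial F/\partial\gamma_1=0$ and multiply by $\gamma_1^{2}$; all three $a$-terms line up on one side and all three $b$-terms on the other, giving
\[
\gamma_1^{2}\!\left(\tfrac{a_1}{\gamma_2}+\gamma_2 a_2+2a_0\right)=\gamma_2 b_1+\tfrac{b_2}{\gamma_2}+2b_0,
\]
which, solved for $\gamma_1$, is exactly \eqref{rho1_closed}. Note that both sides are strictly positive for $\gamma_2>0$, so this expression is always well-defined.

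The third step produces the quartic \eqref{quartic}. Set $\partial F/\partial\gamma_2=0$; none of the three $\gamma_1$-only terms survive, so multiplying by $\gamma_1\gamma_2^{2}$ yields
\[
\gamma_1^{2}(\gamma_2^{2}a_2-a_1)=b_2-\gamma_2^{2}b_1,
\]
a second formula for $\gamma_1^{2}$. Equating it with the one from step two and clearing denominators gives a polynomial identity in $\gamma_2$; after expanding both sides the degree-4 and degree-2 mixed terms cancel in pairs, leaving exactly \eqref{quartic} up to an overall factor of $2$. This is the only genuinely computational step and I expect it to be the main nuisance, mainly because of sign bookkeeping: the second formula for $\gamma_1^{2}$ has a numerator and denominator that may each be negative, so one must verify that cross-multiplication is legitimate (it is, since both sides are well-defined for the optimum, and the algebraic identity we derive is valid regardless of signs).

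The final step is uniqueness of the positive root of the quartic. With all six quantities $a_i,b_j\ge0$, the coefficient sequence of \eqref{quartic} is $(+,+,0,-,-)$, which exhibits exactly one sign change. By Descartes' rule of signs there is at most one positive real root; existence of at least one follows either from the evaluation $p(0)=-a_1b_2\le0$ and $p(\gamma_2)\to+\infty$ with continuity, or simply from the established existence of the minimizer $(\gamma_1^\star,\gamma_2^\star)$ in step one. Combining the uniqueness of $\gamma_2^\star$ with the explicit formula \eqref{rho1_closed} for $\gamma_1^\star$ in terms of $\gamma_2^\star$ concludes the proof.
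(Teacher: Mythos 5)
Your proposal is correct, and its computational core is essentially the paper's: set both partial derivatives to zero, read off \eqref{rho1_closed} from the $\gamma_1$-stationarity equation, extract $\gamma_1^{\star 2}$ a second time from the $\gamma_2$-stationarity equation, and equate to obtain \eqref{quartic}. (I verified your cross-multiplication: it yields exactly twice the quartic, as you predicted; the paper reaches the same polynomial by first adding and subtracting the two cleared equations to get the two ratios in \eqref{rho1_forms}, which has the cosmetic advantage that both numerators and denominators are manifestly positive, sidestepping the sign caveat you correctly raise about $\gamma_2^2\Vert\bm{X}_2^\star\Vert^2-\Vert\bm{X}_1^\star\Vert^2$.) Where you genuinely diverge --- and improve on the paper --- is the existence/uniqueness step. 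The paper asserts that the objective of \eqref{opt_pro_specific} is ``a Euclidean norm square function'' and hence strictly convex; as a function of $(\gamma_1,\gamma_2)$ on the positive orthant this is not true (e.g.\ $\gamma_1/\gamma_2$ has an indefinite Hessian), so the paper's justification is at best a gesture toward the original formulation \eqref{gen}. Your substitution $\gamma_1=e^u,\gamma_2=e^v$ is the standard geometric-programming argument and makes the claim rigorous, including the honest observation that strict convexity and coercivity require a non-degeneracy condition on the blocks (the exponent vectors must span $\mathbb{R}^2$); the paper silently assumes generic data. Your Descartes'-rule argument for the uniqueness of the positive root of \eqref{quartic} is likewise absent from the paper, which only infers it indirectly from uniqueness of the minimizer; having both is worthwhile. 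The only blemishes are trivial: there are two, not three, terms of the objective independent of $\gamma_2$, and the degenerate cases excluded by your non-degeneracy assumption (e.g.\ only $\bm{X}_1^\star$ and $\bm{\Lambda}_1^\star$ nonzero) genuinely break uniqueness, so that assumption should be stated as part of the proposition rather than parenthetically.
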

	\begin{proof}
	First,  we show the solution uniqueness. Let us  note that the objective function is  a  Euclidean norm  square  function, and the domain is the positive orthant. That said, we are minimizing   a strictly convex function, its minimizer  is therefore unique, owing to \cite{boyd2004convex}. 
	
	Now, we establish the closed-form expression for the solution pair  $ (\gamma_1^\star,\gamma_2^\star) $.
The minimizer is obtained when the two gradients w.r.t. $\gamma_1$ and $\gamma_2$ vanish, i.e., 
		\begin{align}
		\frac{1}{{\gamma_2^\star}}		& \Vert \bm{X}^\star_1 \Vert^2   -  \frac{{\gamma_2^\star}}{{\gamma_1^\star}^2}\Vert \bm{\Lambda}^\star_1 \Vert^2
		+ {\gamma_2^\star}\Vert \bm{X}^\star_2 \Vert^2 - \frac{1}{{\gamma_1^\star}^2{\gamma_2^\star}}\Vert \bm{\Lambda}^\star_2 \Vert^2 + 2\Vert \bm{X}^\star_0 \Vert^2 - \frac{2}{{\gamma_1^\star}^2}\Vert \bm{\Lambda}^\star_0 \Vert^2  =   0 , \label{eq1}\\	 
		-\frac{{\gamma_1^\star}}{{\gamma_2^\star}^2} &\Vert \bm{X}^\star_1 \Vert^2 +  \frac{1}{{\gamma_1^\star}}\Vert \bm{\Lambda}^\star_1 \Vert^2
		+ {\gamma_1^\star}\Vert \bm{X}^\star_2 \Vert^2 - 
		\frac{1}{{\gamma_1^\star}{\gamma_2^\star}^2}\Vert \bm{\Lambda}^\star_2 \Vert^2 = 0. 
		\end{align}
		By \eqref{eq1}, we instantly obtain the $\gamma_1^\star$ expression in  our proposition.
		
		All what left  is the  $\gamma_2^\star$ expression. To show  it, since  $\gamma_1, \gamma_2 > 0$, we can rewrite the above two relations into
		\begin{align}
		{\gamma_1^\star}^2 & \Vert \bm{X}^\star_1 \Vert^2   -  {{\gamma_2^\star}^2}\Vert \bm{\Lambda}^\star_1 \Vert^2
		+ {\gamma_1^\star}^2{\gamma_2^\star}^2\Vert \bm{X}^\star_2 \Vert^2 - \Vert \bm{\Lambda}^\star_2 \Vert^2 + 2{\gamma_1^\star}^2{\gamma_2^\star}\Vert \bm{X}^\star_0 \Vert^2 - {2}{\gamma_2^\star}\Vert \bm{\Lambda}^\star_0 \Vert^2  =   0,  \\	 
		-{\gamma_1^\star}^2 &\Vert \bm{X}^\star_1 \Vert^2 +  {\gamma_2^\star}^2\Vert \bm{\Lambda}^\star_1 \Vert^2
		+ {\gamma_1^\star}^2 {\gamma_2^\star}^2\Vert \bm{X}^\star_2 \Vert^2 - 
		\Vert \bm{\Lambda}^\star_2 \Vert^2 = 0, 
		\end{align}
		which, by addition and subtraction,  can be simplified to 
		\begin{align}
		{\gamma_1^\star}^2&{\gamma_2^\star}^2\Vert \bm{X}^\star_2 \Vert^2 - \Vert \bm{\Lambda}^\star_2 \Vert^2 + {\gamma_1^\star}^2{\gamma_2^\star}\Vert \bm{X}^\star_0 \Vert^2 - {\gamma_2^\star}\Vert \bm{\Lambda}^\star_0 \Vert^2  =   0 , \\	 
		{\gamma_1^\star}^2 & \Vert \bm{X}^\star_1 \Vert^2   -  {{\gamma_2^\star}^2}\Vert \bm{\Lambda}^\star_1 \Vert^2
		+ {\gamma_1^\star}^2{\gamma_2^\star}\Vert \bm{X}^\star_0 \Vert^2 - {\gamma_2^\star}\Vert \bm{\Lambda}^\star_0 \Vert^2  =   0.
		\end{align}
		Separating $ {\gamma_1^\star} $ gives 
		\begin{align}\label{rho1_forms}
		{\gamma_1^\star}^2  = \frac{\Vert \bm{\Lambda}^\star_2 \Vert^2 + {\gamma_2^\star}\Vert \bm{\Lambda}^\star_0 \Vert^2}{{\gamma_2^\star}^2\Vert \bm{X}^\star_2\Vert^2 + {\gamma_2^\star}\Vert \bm{X}^\star_0 \Vert^2},  \quad  	 
		{\gamma_1^\star}^2  =  \frac{{\gamma_2^\star}^2\Vert \bm{\Lambda}^\star_1 \Vert^2 + {\gamma_2^\star}\Vert \bm{\Lambda}^\star_0 \Vert^2}{\Vert \bm{X}^\star_1 \Vert^2 + {\gamma_2^\star}\Vert \bm{X}^\star_0 \Vert^2} .  \tag{$ \gamma_1^\star $}
		\end{align}
		The above two relations must hold simultaneously, yielding the  following degree-4 polynomial:
		\begin{equation} \label{poly}
		{\gamma_2^\star}^4 \Vert \bm{X}^\star_2\Vert^2\Vert \bm{\Lambda}^\star_1 \Vert^2 + {\gamma_2^\star}^3 (\Vert \bm{X}^\star_2\Vert^2\Vert \bm{\Lambda}^\star_0 \Vert^2 + \Vert \bm{X}^\star_0 \Vert^2\Vert \bm{\Lambda}^\star_1 \Vert^2) 
		-  {\gamma_2^\star} (\Vert \bm{\Lambda}^\star_2 \Vert^2\Vert \bm{X}^\star_0 \Vert^2 + \Vert \bm{\Lambda}^\star_0 \Vert^2 \Vert \bm{X}^\star_1 \Vert^2) - \Vert \bm{\Lambda}^\star_2 \Vert^2 \Vert \bm{X}^\star_1 \Vert^2 = 0.
		\end{equation}
		The proof is now concluded.
	\end{proof}

	 Luckily, degree-4 is the highest order  polynomial that admits a closed-form solution, first proved in the Abel–Ruffini theorem in 1824. 
	 Lodovico Ferrari is considered the one to first find the solution in 1540. 
	This is an ancient topic that is widely known, except that  the closed-form formula often appears messy in the literature. The simplest and correct one we found is in  \cite{quartic}.  Particularly, due to the quadratic coefficient is zero in our case, we made some simplifications.

	For light of notations,  the quartic polynomial in \eqref{quartic} can be abstractly cast as
	\begin{equation} 
	a \gamma_2^4 + b\gamma_2^3 +  d\gamma_2 + e = 0.
	\end{equation}
	For  real coefficients and $ b, d $ not simultaneously equal 0, the above polynomial admits the following four roots:
	\begin{equation}\label{closed_rho2}
	\gamma_2 = \begin{cases}
	\frac{1}{2} ( -\frac{b}{2a} - {u_4} - \sqrt{u_5 - u_6})  , 		\\
	\frac{1}{2} ( -\frac{b}{2a} - {u_4} + \sqrt{u_5 - u_6})  ,		\\
	\frac{1}{2} ( -\frac{b}{2a} + {u_4} - \sqrt{u_5 + u_6}) , 	\\
	\frac{1}{2} ( -\frac{b}{2a} + {u_4} + \sqrt{u_5 + u_6})  ,
	\end{cases} 
	\end{equation}
	where
	\begin{equation}
	u_4 = \sqrt{\frac{b^2}{4a^2}+u_3},\quad
	u_5 = \frac{b^2}{2a^2} - u_3,\quad
	u_6 = -\frac{\frac{b^3}{a^3} + \frac{8d}{a} }{4u_4},
	\end{equation}
	and where
	\begin{equation}
	u_1 = \frac{\sqrt{27}}{2}(ad^2 + b^2  e),\quad
	u_2 = u_1+ \sqrt{(bd - 4ae)^3 + u_1^2},\quad
	u_3 = \frac{1}{\sqrt{3}a} (\sqrt[3]{u_2} - \frac{bd - 4ae}{ \sqrt[3]{u_2}} ).	
	\end{equation}

	\subsection{Structure II: partition selection}
		In the previous section, we determined the optimal  choice of any feasible metric. 
		We emphasize that there are multiple feasible definitions, depending on the partition integer $ K $, recall \eqref{S_def_compact}. 
		Then, if the partitioning changes,  the coefficients of the degree-4 polynomial will also change. 
	Since  $ K $ can be any integer from  $ \{ 1, 2,  \dots, N-1 \} $, there exist $ (N-1) $ different partition options and consequently
		 $ (N-1) $ different polynomials.	
		This implies another type of degree-of-freedom  that can be exploited. 
		We will characterize it here.

%

	For notation convenience, denote the  collection of all associated partitioned matrices by
	\begin{equation}
	\mathbb P^N = \bigg\{ \left[\begin{array}{cc} 
	\bm{V}_1\,\, \in  \mathbb{S}^K 		\qquad\quad\,\, 									&   \qquad\bm{V}_0   \in  \mathbb{R}^{K\times  (N-K)}\\ 
	 \bm{V}_0^\text{T}  \in  \mathbb{R}^{ (N-K)\times K}	& \bm{V}_2 \in \mathbb{S}^{N-K } \end{array}\right]  \,\, \bigg\vert \,\, \forall\,\bm{V} \in \mathbb{S}^N , \,  K = 1, \dots, N-1  \bigg\}.    \tag{partition}
	\end{equation}
	Then, following the basic polynomial in \eqref{opt_pro_specific}, we may additional consider the partitioning, which yields
	\begin{equation}\label{pro0}
	\underset{  \bm{X}^\star, \bm\Lambda^\star  \in  \mathbb P^N }{\text{minimize}}\quad
	\underset{ \gamma_1,\gamma_2>0 }{\text{minimize}}\quad
	\frac{\gamma_1}{\gamma_2}\Vert \bm{X}^\star_1 \Vert^2 +  \frac{\gamma_2}{\gamma_1}\Vert \bm{\Lambda}^\star_1 \Vert^2
	+ \gamma_1\gamma_2\Vert \bm{X}^\star_2 \Vert^2 + 
	\frac{1}{\gamma_1\gamma_2}\Vert \bm{\Lambda}^\star_2 \Vert^2 + 2\gamma_1\Vert \bm{X}^\star_0 \Vert^2 + \frac{2}{\gamma_1}\Vert \bm{\Lambda}^\star_0 \Vert^2.
	\end{equation}
	The inner problem is solvable, enabling a simplification.
	\begin{lem}
	Problem	\eqref{pro0} can be simplified  into
		\begin{equation}\label{pro1}
	\underset{  \bm{X}^\star, \bm\Lambda^\star  \in  \mathbb P^N }{\text{minimize}}\quad
	 2\sqrt{ \bigg(	\frac{1}{\gamma_2^\star}\Vert \bm{X}^\star_1 \Vert^2  + \gamma_2^\star\Vert \bm{X}^\star_2 \Vert^2 + 2\Vert \bm{X}^\star_0 \Vert^2 \bigg)
	 	\bigg( \frac{1}{\gamma_2^\star}\Vert \bm{\Lambda}^\star_2 \Vert^2 +  {\gamma_2^\star}\Vert \bm{\Lambda}^\star_1 \Vert^2 + 2\Vert \bm{\Lambda}^\star_0 \Vert^2\bigg)},
		\end{equation}
		with $ \gamma_2^\star $  given by the unique positive real root of  \eqref{quartic}.
	\end{lem}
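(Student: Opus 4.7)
The plan is to reduce the inner minimization over $(\gamma_1,\gamma_2)$ to a one-dimensional problem via a clean partial minimization in $\gamma_1$, and then invoke Prop. \ref{prop_blk} directly for the remaining step. Specifically, I will regroup the six terms of the objective in \eqref{pro0} by powers of $\gamma_1$, so that the objective takes the separable form
\begin{equation*}
\gamma_1 \cdot A(\gamma_2) \,+\, \gamma_1^{-1} \cdot B(\gamma_2),
\end{equation*}
where $A(\gamma_2) \eqdef \gamma_2^{-1}\Vert \bm{X}^\star_1\Vert^2 + \gamma_2\Vert \bm{X}^\star_2\Vert^2 + 2\Vert \bm{X}^\star_0\Vert^2$ and $B(\gamma_2) \eqdef \gamma_2\Vert \bm{\Lambda}^\star_1\Vert^2 + \gamma_2^{-1}\Vert \bm{\Lambda}^\star_2\Vert^2 + 2\Vert \bm{\Lambda}^\star_0\Vert^2$. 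A term-by-term check using the expansions $\gamma_1\cdot\gamma_2^{-1}\Vert\bm{X}_1^\star\Vert^2 = (\gamma_1/\gamma_2)\Vert\bm{X}_1^\star\Vert^2$, etc., confirms this regrouping is exact.

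Next, for any fixed $\gamma_2>0$ both $A(\gamma_2),B(\gamma_2)$ are strictly positive, so the AM-GM inequality yields
\begin{equation*}
\gamma_1 A(\gamma_2) + \gamma_1^{-1} B(\gamma_2) \,\geq\, 2\sqrt{A(\gamma_2)\,B(\gamma_2)},
\end{equation*}
with equality if and only if $\gamma_1 = \sqrt{B(\gamma_2)/A(\gamma_2)}$. Comparing this expression with \eqref{rho1_closed} confirms it is exactly the $\gamma_1^\star$ already identified in Prop. \ref{prop_blk}, which provides a reassuring consistency check. Substituting the AM-GM lower bound into \eqref{pro0} produces precisely the objective appearing in \eqref{pro1}.

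It remains to argue that the outer $\gamma_2^\star$ in the sequentially minimized expression coincides with the positive real root of the quartic \eqref{quartic}. This is where I would appeal to Prop. \ref{prop_blk} rather than redo the calculus: since the joint minimizer $(\gamma_1^\star,\gamma_2^\star)$ of the strictly convex inner problem is unique, sequential minimization ($\min_{\gamma_2}\min_{\gamma_1}$) must produce the same $(\gamma_1^\star,\gamma_2^\star)$. Alternatively, one can differentiate $A(\gamma_2)B(\gamma_2)$ in $\gamma_2$ and use the relation $\gamma_1^{\star 2}=B/A$ to recover \eqref{poly} directly; this is just the first identity in \eqref{rho1_forms} substituted back.

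No step presents a real obstacle; the proof is essentially AM-GM plus a uniqueness-based commutation of the two minimizations. The only care required is the bookkeeping in the regrouping step so that the six coefficients of \eqref{pro0} are correctly reassembled into $\gamma_1 A + \gamma_1^{-1} B$ without sign or factor errors.
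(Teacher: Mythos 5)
Your proposal is correct and follows essentially the same route as the paper: both regroup the six terms into $\gamma_1 A(\gamma_2) + \gamma_1^{-1}B(\gamma_2)$ and then use the identity $\gamma_1^\star=\sqrt{B/A}$ from \eqref{rho1_closed} to collapse the sum to $2\sqrt{AB}$, the paper by direct substitution of the known optimal pair and you by AM--GM (whose equality case is that same substitution). Your added remark that uniqueness of the joint minimizer justifies commuting the two minimizations is a slightly more careful phrasing of a step the paper takes implicitly, but it is not a different argument.
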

	\begin{proof}
		Following Prop. \ref{prop_blk},   $ ({\gamma_1^\star},{\gamma_2^\star}) $ is the unique solution pair of  the inner problem  in \eqref{pro0}. 
		Hence, its minimum can be attained by substituting the optimal solution, yielding
	\begin{align}
f =& \quad \frac{\gamma_1^\star}{\gamma_2^\star}\Vert \bm{X}^\star_1 \Vert^2 +  \frac{\gamma_2^\star}{\gamma_1^\star}\Vert \bm{\Lambda}^\star_1 \Vert^2
+ \gamma_1^\star\gamma_2^\star\Vert \bm{X}^\star_2 \Vert^2 + 
\frac{1}{\gamma_1^\star\gamma_2^\star}\Vert \bm{\Lambda}^\star_2 \Vert^2 + 2\gamma_1^\star\Vert \bm{X}^\star_0 \Vert^2 + \frac{2}{\gamma_1^\star}\Vert \bm{\Lambda}^\star_0 \Vert^2 \nonumber\\
=&\quad {\gamma_1^\star}\bigg(	\frac{1}{\gamma_2^\star}\Vert \bm{X}^\star_1 \Vert^2  + \gamma_2^\star\Vert \bm{X}^\star_2 \Vert^2 + 2\Vert \bm{X}^\star_0 \Vert^2 \bigg) +  \frac{1}{\gamma_1^\star} \bigg( \frac{1}{\gamma_2^\star}\Vert \bm{\Lambda}^\star_2 \Vert^2 +  {\gamma_2^\star}\Vert \bm{\Lambda}^\star_1 \Vert^2 + 2\Vert \bm{\Lambda}^\star_0 \Vert^2\bigg) \nonumber\\
=&\quad   2\sqrt{ \bigg(	\frac{1}{\gamma_2^\star}\Vert \bm{X}^\star_1 \Vert^2  + \gamma_2^\star\Vert \bm{X}^\star_2 \Vert^2 + 2\Vert \bm{X}^\star_0 \Vert^2 \bigg)\bigg( \frac{1}{\gamma_2^\star}\Vert \bm{\Lambda}^\star_2 \Vert^2 +  {\gamma_2^\star}\Vert \bm{\Lambda}^\star_1 \Vert^2 + 2\Vert \bm{\Lambda}^\star_0 \Vert^2\bigg)},
\end{align}	
where the  last line is  by invoking \eqref{rho1_closed}. The proof is now concluded.
	\end{proof}
	
Next, we can substitute the explicit closed-form expression of $ \gamma_2^\star $, recall \eqref{closed_rho2}. However, this  yields a complicated problem, and appears no closed-form solution available. 
This motivates us to consider an alternative. 
We note that the partition options are finite with  $ (N-1) $ choices. Recall that given one choice, there is a corresponding polynomial. Owing to the polynomial admitting a unique closed-form solution, an exhaustive search over the $ (N-1) $ candidates are computationally feasible. 
This leads to a search strategy  detailed  in  the  next section.

		\subsection{Full-structure optimal choice}
	Following the previous section, we do a finite $ (N-1) $-times search to  exploit the partition structure. 
	The result algorithm yields  a solution to the joint optimization problem \eqref{pro0}, or equivalently \eqref{pro1}.
	\begin{algorithm}[H]
		\caption{A finite search }
		\label{Algo_1}
		\begin{algorithmic}[1]
			\REQUIRE Optimal primal and dual solutions $ \bm{X}^\star , \bm{\Lambda}^\star $.
			\WHILE{ $ K = 1, \cdots, N-1 $ } 
			\STATE Denote
			\begin{equation}
			\bm{X} = \left[\begin{array}{cc} \bm{X}_1 & \bm{X}_0\\ \bm{X}_0^\text{T} & \bm{X}_2 \end{array}\right],  \quad
			\bm{\Lambda} = \left[\begin{array}{cc} \bm{\Lambda}_1 & \bm{\Lambda}_0\\ \bm{\Lambda}_0^\text{T} & \bm{\Lambda}_2 \end{array}\right],
			\end{equation}	
			with  $ \bm{X}_1,\bm{\Lambda}_1 \in \mathbb{S}^{K} $,
			\, $ \bm{X}_0,\bm{\Lambda}_0 \in \mathbb{R}^{K \times (N-K)} $, 
			\,\,$ \bm{X}_2,\bm{\Lambda}_2 \in \mathbb{S}^{N-K} $.	
			\STATE Compute  $ \gamma_2^\star $, as the  unique positive  real  root  of 
			\begin{equation*} 
			{\gamma_2^\star}^4 \Vert \bm{X}^\star_2\Vert^2\Vert \bm{\Lambda}^\star_1 \Vert^2 + {\gamma_2^\star}^3 (\Vert \bm{X}^\star_2\Vert^2\Vert \bm{\Lambda}^\star_0 \Vert^2 + \Vert \bm{X}^\star_0 \Vert^2\Vert \bm{\Lambda}^\star_1 \Vert^2) 
			-  {\gamma_2^\star} (\Vert \bm{\Lambda}^\star_2 \Vert^2\Vert \bm{X}^\star_0 \Vert^2 + \Vert \bm{\Lambda}^\star_0 \Vert^2 \Vert \bm{X}^\star_1 \Vert^2) - \Vert \bm{\Lambda}^\star_2 \Vert^2 \Vert \bm{X}^\star_1 \Vert^2 = 0,
			\end{equation*}	
			with a   closed-form expression in \eqref{closed_rho2}.
			\STATE Compute
			\begin{equation}
			\gamma_1^\star = \frac{\alpha}{\beta}, \quad\,\, f_K = 2{\alpha}{\beta},
			\end{equation}		
			with $ \alpha = \sqrt{\frac{1}{\gamma_2^\star}\Vert \bm{X}^\star_1 \Vert^2  + \gamma_2^\star\Vert \bm{X}^\star_2 \Vert^2 + 2\Vert \bm{X}^\star_0 \Vert^2} $, \,\,\, 
			$ \beta = \sqrt{\frac{1}{\gamma_2^\star}\Vert \bm{\Lambda}^\star_2 \Vert^2 +  {\gamma_2^\star}\Vert \bm{\Lambda}^\star_1 \Vert^2 + 2\Vert \bm{\Lambda}^\star_0 \Vert^2} $.
			\ENDWHILE
			\ENSURE 	Choose  $ (\gamma_1^\star,\gamma_2^\star) $ corresponding to  $\underset{K}{\text{argmin}}\, \{f_K\}$.
		\end{algorithmic}
	\end{algorithm}

	\begin{remark}[Optimal  partition guess]
	While theoretically we need to check all $ (N-1) $ partition choices,  in  practice it  is possible to make a guess beforehand. 
	Empirically, we find that the optimal partition corresponds to the most significant ill-conditioning structure within the optimal solutions.	
	For example, in our later  applications,  
	the QCQP  typically admits the optimal partitioning being $ K =  N-1 $, due to the right-bottom block $ \bm{\Lambda}_2 $ always fixed to $ 1 $, causing  a pre-known ill-conditioning structure.  
	\end{remark}

	\section{Performance guarantee}\label{peform}
	In this section, we provide a performance guarantee by investigating the worst-case scenario.
	 Specifically,  we
	(i)  show that our constructed metric  is no worse than a scalar parameter; 
	(ii)  identify  the structure that leads to the  worst  case.

	\subsection{Worst-case  performance}
	We claim that the worst case is when the variable metric reduces to a scalar parameter. 
	To see this, let us note that the variable metric  $ \mathcal{M} $  in \eqref{stan_a} can be  decomposed into
	\begin{equation}\label{step_size2}
	\mathcal{M}
	= \mathcal{M}_2 \circ \mathcal{M}_1,
	\end{equation}	
	with
	\begin{equation}
	\mathcal{M}_1  = \gamma_1 \mathcal{I},
	\quad
	\mathcal{M}_2  = 
	\left[\begin{array}{cc} 
	\frac{1}{\gamma_2}\,\bm{1}_1 &   \qquad\, \bm{1}_0   
	\\ \quad\,\, \bm{1}_0 &  \,\quad\gamma_2 \, \bm{1}_2  \end{array}\right] , 
	\end{equation}
	where $ \mathcal{I} $ denotes the identity operator. 
	From above, we  see that $ \gamma_1 $ plays the exact same role as  a scalar step-size, and  $ \gamma_2 $ arises owing to the   \textit{Schur complement} lemma. It instantly follows  that, \\
	
		$\bullet$\,(i) If  $ \gamma_2 = 1 $, the variable  metric reduces to a scalar parameter;  \\
		
		$\bullet$\,(ii) Regardless of the partitioning, the joint  optimal pair $ (\gamma_1^\star , \gamma_2^\star ) $ cannot be no worse than the partial optimal choice $ (\gamma_1^\star, 1 ) $. \\


	In fact, $ \gamma_2^\star = 1 $ is exactly the worst  case, as illustrated below. 
	\begin{prop}[the worst case]\label{lem_worst}
		The worst case happens if and only if  $ \gamma_1^\star =  \Vert \bm{\Lambda}^\star \Vert/  \Vert \bm{X}^\star \Vert $  and $ \gamma_2^\star = 1 $.
	\end{prop}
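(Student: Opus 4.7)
The plan is to exploit the decomposition $\mathcal{M} = \mathcal{M}_2 \circ \mathcal{M}_1$ established in \eqref{step_size2} together with the uniqueness of the joint minimizer from Prop. \ref{prop_blk}. Note first that the matrix $\mathcal{M}_2$ collapses to the all-ones pattern (i.e. acts as the identity under elementwise multiplication) if and only if $\gamma_2 = 1$. Hence the metric reduces to a scalar step-size $\gamma_1\,\mathcal{I}$ precisely when $\gamma_2 = 1$, and the ``worst case'' — no improvement of the optimal metric over the best scalar choice — is equivalent to $\gamma_2^\star = 1$. Since \eqref{opt_pro_specific} subsumes the scalar problem via the slice $\gamma_2 = 1$, its optimum is always no worse than the best scalar choice, and equality of the two optima is what I take as the formal meaning of the worst case.

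For the ``only if'' direction, I would substitute $\gamma_2 = 1$ into the objective of \eqref{opt_pro_specific} and use the partition identities
\begin{equation}
\Vert \bm{X}^\star\Vert^2 = \Vert \bm{X}^\star_1 \Vert^2 + \Vert \bm{X}^\star_2 \Vert^2 + 2\Vert \bm{X}^\star_0 \Vert^2, \qquad \Vert \bm{\Lambda}^\star\Vert^2 = \Vert \bm{\Lambda}^\star_1 \Vert^2 + \Vert \bm{\Lambda}^\star_2 \Vert^2 + 2\Vert \bm{\Lambda}^\star_0 \Vert^2,
\end{equation}
to collapse the two-variable objective into the one-variable problem $\min_{\gamma_1 > 0}\, \gamma_1\Vert\bm{X}^\star\Vert^2 + \gamma_1^{-1}\Vert\bm{\Lambda}^\star\Vert^2$, whose unique positive minimizer is $\gamma_1 = \Vert\bm{\Lambda}^\star\Vert/\Vert\bm{X}^\star\Vert$ (e.g., by AM--GM or by vanishing the derivative). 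Because Prop. \ref{prop_blk} ensures uniqueness of the joint minimizer of \eqref{opt_pro_specific}, the fact that the scalar-restricted optimum already attains the joint minimum forces $(\gamma_1^\star,\gamma_2^\star) = (\Vert\bm{\Lambda}^\star\Vert/\Vert\bm{X}^\star\Vert,\, 1)$.

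For the ``if'' direction, substituting $(\gamma_1^\star,\gamma_2^\star) = (\Vert\bm{\Lambda}^\star\Vert/\Vert\bm{X}^\star\Vert,\, 1)$ back into \eqref{step_size2} gives $\mathcal{M}_2 = \mathcal{I}$ and therefore $\mathcal{M} = \gamma_1^\star\,\mathcal{I}$. The metric solver then coincides with \eqref{sdp_admm} driven by the best scalar step-size $\gamma_1^\star$, so by definition no improvement is achieved — this is exactly the worst case.

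The main obstacle I anticipate is not technical but definitional: pinning down precisely what ``worst case'' means. Once it is framed as equality between the joint-optimal and best-scalar objective values of \eqref{opt_pro_specific}, the uniqueness statement of Prop. \ref{prop_blk} closes the equivalence immediately, and no further machinery (and in particular no appeal to the quartic \eqref{quartic}) is needed.
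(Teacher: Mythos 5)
Your proposal is correct and follows essentially the same route as the paper's proof: both rest on the decomposition $\mathcal{M}=\mathcal{M}_2\circ\mathcal{M}_1$ from \eqref{step_size2}, the observation that $\gamma_2=1$ collapses the metric to a scalar step-size, the fact that the scalar-restricted optimum is $\Vert\bm{\Lambda}^\star\Vert/\Vert\bm{X}^\star\Vert$, and the uniqueness of the joint minimizer from Prop.~\ref{prop_blk} to close both directions. Your write-up is in fact somewhat more explicit than the paper's (you spell out the definitional meaning of ``worst case'' and the collapse of the objective of \eqref{opt_pro_specific} at $\gamma_2=1$ via the partition identities), but the underlying argument is the same.
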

	\begin{proof}
	In view of \eqref{step_size2},  jointly optimizing $ \mathcal{M}_2 $ and $ \mathcal{M}_1 $ cannot be worse than optimizing $ \mathcal{M}_1 $  alone, 
	since otherwise one can always set $ \mathcal{M}_2 =   \mathcal{I}$  and $ \mathcal{M}_1  = ( \Vert \bm{\Lambda}^\star \Vert/  \Vert \bm{X}^\star \Vert ) \,\mathcal{I} $   to improve the performance, 
	which is a contradiction to  the solution being optimal. 
	
	The reverse  also holds due to the  uniqueness of the worst case. To see this, recall  from Prop. \ref{prop_blk} (strictly convex function), the joint optimal solution is always unique. This  implies that the above worst-case is also unique. 	
	The proof is now concluded.	
	\end{proof}

	\subsection{Worst-case  condition}	\label{sec_simple_data}
	Here,  we  study the necessary and  sufficient condition for the worst case. 	 We will see that the worst case arises if and only if the data is `simple-structured', i.e., no ill-conditioning structure within sub-blocks. 
	In this case,
	we can prove that 
	all $ (N-1) $  partition choices are equivalent,  leading to  the unique worst case.

	To start, we  need one lemma.
	\begin{lem}\label{basic}
		The following holds:
		\begin{equation}
		\frac{a}{b} = \frac{c}{d}  \,\,\Longleftrightarrow\,\,  \frac{a+c}{b+d} = \frac{a-c}{b-d}.
		\end{equation}
	\end{lem}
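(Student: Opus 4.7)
The plan is to prove this equivalence by clearing denominators on both sides and reducing each equation to the same product-form identity $ad = bc$, which then makes the equivalence transparent. Throughout, I will implicitly assume the usual non-vanishing conditions ($b, d, b+d, b-d$ all nonzero) so that the fractions appearing in the statement are well-defined.

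For the forward direction, I would start from $a/b = c/d$, cross-multiply to obtain $ad = bc$, and then show that $(a+c)(b-d) = (a-c)(b+d)$. Expanding both sides gives $ab - ad + bc - cd$ on the left and $ab + ad - bc - cd$ on the right, so the proposed equality reduces to $-ad + bc = ad - bc$, i.e., $bc = ad$, which is exactly the hypothesis. Dividing by $(b+d)(b-d)$ then yields the claimed fractional identity.

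For the reverse direction, the same computation runs backwards: starting from $(a+c)/(b+d) = (a-c)/(b-d)$, cross-multiplication gives $(a+c)(b-d) = (a-c)(b+d)$, and expanding again produces $bc = ad$, from which $a/b = c/d$ follows upon dividing by $bd$. So the two directions are mirror images of a single expansion, and the lemma reduces to the observation that both sides of the equivalence are equivalent reformulations of $ad = bc$.

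There is no real obstacle here; the only care required is to note that the equivalence should be read with the implicit assumption that all denominators are nonzero, which is the standard convention when manipulating such ratios and is exactly the setting in which this lemma will later be applied (where $a, b, c, d$ will correspond to strictly positive norm-squared quantities, guaranteeing the nonvanishing hypotheses automatically).
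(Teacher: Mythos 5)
Your proof is correct and follows essentially the same route as the paper: cross-multiply both equalities and observe that each expansion reduces to the single identity $ad=bc$. Your explicit remark about the non-vanishing of $b$, $d$, $b+d$, $b-d$ is a sensible addition that the paper leaves implicit (and is indeed satisfied in the application, where the quantities are positive norm-squares).
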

	\begin{proof}	
		Expand the terms. The left-hand-side  gives $ ad = bc $, and  the right-hand-side  gives $ bc-ad + (ab - dc) = ad - bc + (ab - dc) $,  which reduces to $ ad = bc $.
		The proof is therefore concluded.
	\end{proof}
	Now we are ready for the result. 
	\begin{prop}[worst-case condition]\label{fail_cond}
		Denote the $ i $th-column of  $ \bm{X}^\star, \bm{\Lambda}^\star $ as $ \bm{x}_{i}^\star, \bm{\lambda}_{i}^\star $, respectively.
		Suppose  $\exists\,  K \in  \{1, 2,\dots, N-1\} $ such that
		\begin{equation} \label{rel_ori}
		\frac{ \sum_{i=1}^K\Vert \bm{\lambda}_{i}^\star\Vert^2  }{ \sum_{i=1}^K\Vert \bm{x}_{i}^\star\Vert^2  } = 
		\frac{ \sum_{i=K+1}^{N}\Vert \bm{\lambda}_{i}^\star\Vert^2  }{\sum_{i=K+1}^{N} \Vert \bm{x}_{i}^\star\Vert^2 }. \tag{worst. cond.}
		\end{equation} 
		Then, 
		\begin{equation}
		 \gamma_1^\star =  \frac{\Vert \bm{\Lambda}^\star \Vert}{ \Vert \bm{X}^\star \Vert} , \quad  \gamma_2^\star = 1,
		\end{equation}
	 and  vice versa.
	 Moreover, the  above  worst case cannot be  avoided by employing a different partitioning.	
	\end{prop}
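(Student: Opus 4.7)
The plan is to split the proposition into three sub-claims and handle them in order: (i) the forward direction from the worst-case condition \eqref{rel_ori} to $\gamma_2^\star=1$, (ii) the evaluation $\gamma_1^\star=\Vert\bm{\Lambda}^\star\Vert/\Vert\bm{X}^\star\Vert$ together with its converse, and (iii) the invariance of the worst case across alternative partitions.

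For (i), the first move would be to re-express the column-norm sums in \eqref{rel_ori} in terms of the block Frobenius norms that appear in the quartic. Since stacking the first $K$ columns of the symmetric matrix $\bm{X}^\star$ produces the tall block $[\bm{X}_1^\star;\,\bm{X}_0^{\star T}]$, we obtain $\sum_{i=1}^K\Vert\bm{x}_i^\star\Vert^2=\Vert\bm{X}_1^\star\Vert^2+\Vert\bm{X}_0^\star\Vert^2$ and analogously $\sum_{i=K+1}^N\Vert\bm{x}_i^\star\Vert^2=\Vert\bm{X}_0^\star\Vert^2+\Vert\bm{X}_2^\star\Vert^2$; the same identities hold for $\bm{\Lambda}^\star$. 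Substituting into \eqref{rel_ori}, cross-multiplying, and canceling the common term $\Vert\bm{X}_0^\star\Vert^2\Vert\bm{\Lambda}_0^\star\Vert^2$ on both sides yields exactly the equation one gets by plugging $\gamma_2=1$ into \eqref{quartic}. Because Prop.~\ref{prop_blk} asserts that this quartic admits a unique positive real root, this forces $\gamma_2^\star=1$.

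For (ii), plug $\gamma_2^\star=1$ into the closed form \eqref{rho1_closed}; the symmetric-partition Frobenius identity $\Vert\bm{X}^\star\Vert^2=\Vert\bm{X}_1^\star\Vert^2+2\Vert\bm{X}_0^\star\Vert^2+\Vert\bm{X}_2^\star\Vert^2$ (and its $\bm{\Lambda}^\star$ analogue) collapses the radicand to the global-norm ratio, giving $\gamma_1^\star=\Vert\bm{\Lambda}^\star\Vert/\Vert\bm{X}^\star\Vert$. The converse direction is obtained by reading the same chain of equivalences backwards: if $(\gamma_1^\star,\gamma_2^\star)=(\Vert\bm{\Lambda}^\star\Vert/\Vert\bm{X}^\star\Vert,\,1)$, then $\gamma_2=1$ satisfies \eqref{quartic}, and the block-norm identification rearranges back into \eqref{rel_ori}, with the uniqueness of the optimal pair guaranteed by Prop.~\ref{prop_blk}.

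For (iii), I would invoke Lemma~\ref{basic} with $a=\sum_{i=1}^K\Vert\bm{\lambda}_i^\star\Vert^2$, $b=\sum_{i=1}^K\Vert\bm{x}_i^\star\Vert^2$, $c=\sum_{i=K+1}^N\Vert\bm{\lambda}_i^\star\Vert^2$, $d=\sum_{i=K+1}^N\Vert\bm{x}_i^\star\Vert^2$. The worst-case condition $a/b=c/d$ is equivalent by that lemma to $(a+c)/(b+d)=(a-c)/(b-d)$, and the left-hand side equals the partition-free global ratio $R=\Vert\bm{\Lambda}^\star\Vert^2/\Vert\bm{X}^\star\Vert^2$. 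The main obstacle I foresee is making this invariance rigorous for an arbitrary alternative partition $K'$: a single instance of $a/b=c/d=R$ does not pin down the per-column ratios $\Vert\bm{\lambda}_i^\star\Vert^2/\Vert\bm{x}_i^\star\Vert^2$, so a naive rearrangement does not directly transfer. My plan is to combine Lemma~\ref{basic} with the uniqueness established in (i)--(ii) and with Prop.~\ref{lem_worst} so that the scalar choice $(\Vert\bm{\Lambda}^\star\Vert/\Vert\bm{X}^\star\Vert,\,1)$ identified at the original partition is shown to remain feasible and optimal for every other $K'$, thereby precluding any strict improvement via re-partitioning.
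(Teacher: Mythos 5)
Parts (i) and (ii) of your proposal are correct and essentially reproduce the paper's argument. The paper makes the same identification of the column sums with block Frobenius norms ($\sum_{i=1}^K\Vert\bm{x}_i^\star\Vert^2=\Vert\bm{X}_1^\star\Vert^2+\Vert\bm{X}_0^\star\Vert^2$, etc.); instead of substituting $\gamma_2=1$ into \eqref{quartic} it observes that the resulting block relation \eqref{rel1} is the $\gamma_2^\star=1$ instance of the stationarity identity \eqref{rel2}, i.e.\ of the equality of the two expressions for ${\gamma_1^\star}^2$ in \eqref{rho1_forms}. Since \eqref{quartic} is obtained precisely by equating those two expressions, your cross-multiplication is the same computation, and uniqueness from Prop.~\ref{prop_blk} gives both directions exactly as you describe; \eqref{rho1_closed} at $\gamma_2^\star=1$ then collapses to $\Vert\bm{\Lambda}^\star\Vert/\Vert\bm{X}^\star\Vert$.

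The genuine gap is part (iii), and your own diagnosis of the obstacle is accurate, but the repair you sketch will not close it. The pair $(\Vert\bm{\Lambda}^\star\Vert/\Vert\bm{X}^\star\Vert,\,1)$ is a feasible point of \eqref{opt_pro_specific} for \emph{every} partition, so "remains feasible" buys nothing; what must be shown is optimality at every $K'$, and by the equivalence you established in (i)--(ii) applied with $K'$ in place of $K$, that is \emph{equivalent} to \eqref{rel_ori} holding at $K'$. The "moreover" claim is therefore exactly the assertion that the ratio identity at one $K$ propagates to all other cut points, which a single instance of $a/b=c/d$ does not force. The paper's own proof attempts this propagation by telescoping with Lemma~\ref{basic}: peel the $K$-th column off to pass to partition $K-1$, the $(K{+}1)$-th to pass to $K+1$, and iterate. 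Note, however, that this step uses the lemma in the form ``$\frac{a+c}{b+d}=\frac{a'-c}{b'-d}$ implies $\frac{a}{b}=\frac{a'}{b'}$'' with $a\neq a'$ and $b\neq b'$, whereas Lemma~\ref{basic} requires the same $a,b$ on both sides; so the adjacent-partition step is not justified as written, and for generic column norms the implication can fail (it holds, e.g., when all per-column ratios $\Vert\bm{\lambda}_i^\star\Vert^2/\Vert\bm{x}_i^\star\Vert^2$ coincide --- the ``simple-structured data'' regime the paper has in mind). To make part (iii) rigorous you would need either that stronger hypothesis or a restriction of the conclusion to those partitions at which \eqref{rel_ori} actually holds; neither your plan nor the paper's telescoping argument supplies this as stated.
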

	\begin{proof}
		For comparison purpose, let us note that the column relation  \eqref{rel_ori} can be rewritten into a  block relation
		\begin{equation}\label{rel1}
		\frac{ \sum_{i=1}^K\Vert \bm{\lambda}_{i}^\star\Vert^2  }{ \sum_{i=1}^K\Vert \bm{x}_{i}^\star\Vert^2  } = 
		\frac{ \sum_{i=K+1}^{N}\Vert \bm{\lambda}_{i}^\star\Vert^2  }{\sum_{i=K+1}^{N} \Vert \bm{x}_{i}^\star\Vert^2 }
		\quad \Longleftrightarrow \quad	
		\frac{\Vert \bm{\Lambda}^\star_1 \Vert^2 + \Vert \bm{\Lambda}^\star_0 \Vert^2}{\Vert \bm{X}^\star_1 \Vert^2 + \Vert \bm{X}^\star_0 \Vert^2} =
		\frac{\Vert \bm{\Lambda}^\star_2 \Vert^2 + \Vert \bm{\Lambda}^\star_0 \Vert^2}{\Vert \bm{X}^\star_2\Vert^2 + \Vert \bm{X}^\star_0 \Vert^2}
		. 
		\end{equation}
		with sub-blocks $ \bm{X}_1,\bm{\Lambda}_1 \in \mathbb{S}^{K} $, $ \bm{X}_0,\bm{\Lambda}_0 \in \mathbb{R}^{K \times (N-K)} $,  and $ \bm{X}_2,\bm{\Lambda}_2 \in \mathbb{S}^{N-K} $.

		Recall from  \eqref{rho1_forms} that the following always holds:
		\begin{equation}\label{rel2}
	\frac{{\gamma_2^\star}^2\Vert \bm{\Lambda}^\star_1 \Vert^2 + {\gamma_2^\star}\Vert \bm{\Lambda}^\star_0 \Vert^2}{\Vert \bm{X}^\star_1 \Vert^2 + {\gamma_2^\star}\Vert \bm{X}^\star_0 \Vert^2}
	=\frac{\Vert \bm{\Lambda}^\star_2 \Vert^2 + {\gamma_2^\star}\Vert \bm{\Lambda}^\star_0 \Vert^2}{{\gamma_2^\star}^2\Vert \bm{X}^\star_2\Vert^2 + {\gamma_2^\star}\Vert \bm{X}^\star_0 \Vert^2} .
		\end{equation}
		Comparing the above \eqref{rel1} and \eqref{rel2}, we see that \eqref{rel1} is a special case when $\gamma_2^\star = 1$. In this case, we can invoke the $\gamma_1^\star $ expression in  \eqref{rho1_closed}, which gives
		\begin{equation}
			\gamma_1^\star  = \sqrt{\frac{ \Vert\bm{\Lambda}^\star_1 \Vert^2 + \Vert\bm{\Lambda}^\star_2 \Vert^2 + 2\Vert \bm{\Lambda}^\star_0 \Vert^2}{\Vert\bm{X}^\star_1 \Vert^2 + \Vert \bm{X}^\star_2\Vert^2 + 2\Vert \bm{X}^\star_0 \Vert^2}}
			= \frac{\Vert \bm{\Lambda}^\star \Vert}{ \Vert \bm{X}^\star \Vert} .
		\end{equation}
		Recall from  Prop. \ref{prop_blk} (strictly convex function) that the solution $ (\gamma_1^\star, \gamma_2^\star) $ is unique, the reverse therefore  also holds.


		Next, we show that  different partitioning choices coincide  with each other.
		To see this, we separate the $ K $-th column terms from \eqref{rel_ori}, yielding
		\begin{equation} 
		\frac{ \sum_{i=1}^{K-1}\Vert \bm{\lambda}_{i}^\star\Vert^2 + \Vert \bm{\lambda}_{K}^\star\Vert^2 }{ \sum_{i=1}^{K-1}\Vert \bm{x}_{i}^\star\Vert^2 + \Vert \bm{x}_{K}^\star\Vert^2 } =
		\frac{ \sum_{i=K}^{N}\Vert \bm{\lambda}_{i}^\star\Vert^2 - \Vert \bm{\lambda}_{K}^\star\Vert^2 }{\sum_{i=K}^{N} \Vert \bm{x}_{i}^\star\Vert^2 - \Vert \bm{x}_{K}^\star\Vert^2} 
		. 
		\end{equation}
		By Lemma \ref{basic}, the above implies that 
		\begin{equation} 
		\frac{ \sum_{i=1}^{K-1}\Vert \bm{\lambda}_{i}^\star\Vert^2  }{ \sum_{i=1}^{K-1}\Vert \bm{x}_{i}^\star\Vert^2 } = 
		\frac{ \sum_{i=K}^{N}\Vert \bm{\lambda}_{i}^\star\Vert^2 }{\sum_{i=K}^{N} \Vert \bm{x}_{i}^\star\Vert^2 } 
		,
		\end{equation}
		which corresponds to $ (K-1) $-type partitioning. 
		
		Similarly, if we separate the $ (K+1) $-th column terms from \eqref{rel_ori}, we arrive at
		\begin{equation}
		\frac{ \sum_{i=1}^{K+1}\Vert \bm{\lambda}_{i}^\star\Vert^2 - \Vert \bm{\lambda}_{K+1}^\star\Vert^2 }{ \sum_{i=1}^{K+1}\Vert \bm{x}_{i}^\star\Vert^2 - \Vert \bm{x}_{K+1}^\star\Vert^2 } =
		\frac{ \sum_{i=K+2}^{N}\Vert \bm{\lambda}_{i}^\star\Vert^2 + \Vert \bm{\lambda}_{K+1}^\star\Vert^2 }{\sum_{i=K+2}^{N} \Vert \bm{x}_{i}^\star\Vert^2 + \Vert \bm{x}_{K+1}^\star\Vert^2} 
		. 
		\end{equation}
		By Lemma \ref{basic}, the above implies that 
		\begin{equation}
		\frac{ \sum_{i=1}^{K+1}\Vert \bm{\lambda}_{i}^\star\Vert^2 }{ \sum_{i=1}^{K+1}\Vert \bm{x}_{i}^\star\Vert^2 } = 
		\frac{ \sum_{i=K+2}^{N}\Vert \bm{\lambda}_{i}^\star\Vert^2 }{\sum_{i=K+2}^{N} \Vert \bm{x}_{i}^\star\Vert^2 } 
		. 
		\end{equation}
		which corresponds to    $ (K+1) $-type partitioning. 
		Straightforwardly, one can continue this process and show the above relation  holds for all different partitioning approaches. 	
		The proof is therefore concluded.	
	\end{proof}

	\section{Application to QCQP}
	 Quadratically constrained quadratic program (QCQP)  is intrinsically connected to SDP  and has important uses across many fields,  especially in wireless communications.  We will present a  unified  primal-dual formulation, which clarifies some underlying structures. Particularly, we see that this formulation is slightly more general  than  the standard SDP, and	 
	 the primal problem is generally solvable,  while the dual does  not. 
	 Moreover,  we implement a closed-form metric-ADMM solver based on the primal formulation.

	\subsection{Non-convex QCQP}

	In this section,   we investigate the quadratically constrained quadratic programs (QCQPs). 
	Following the literature, we consider the standard form
	\begin{align}\label{qcqp_primal}
\qquad\qquad\qquad\qquad\quad	\underset{\bm{x}}{\text{minimize}}&\quad  \bm{x}^T\bm{A}_0 \bm{x} + 2\bm{b}_0^T\bm{x} \nonumber\\
\qquad\qquad\qquad\qquad\quad	\text{subject\,to}
	& \quad  \bm{x}^T\bm{A}_i \bm{x} + 2\bm{b}_i^T\bm{x} + c_i \leq 0, \quad  i = 1, \cdots, m. \tag{QCQP}
	\end{align}
	with   $  \bm{x} \in  \mathbb{R}^N $,  $  \bm{b} \in  \mathbb{R}^N $, and $ \bm{A}_i \in  \mathbb{S}^N, \ \forall i $  being symmetric matrices.
	The above  can be  equivalently cast  into  
	\begin{align}\label{ab_qcqp}
	\qquad\qquad\,\,  \underset{\bm{X}}{\text{minimize}}\quad\,\,\,  &   \langle\bm{E}_0, \bm{X}\rangle  \nonumber\\
	\qquad\qquad\,\,	\text{subject\,to}\quad\,\,\,				 &  \langle\bm{E}_i, \bm{X}\rangle \leq 0, \quad  i = 1,\dots, m ,\nonumber\\
																	 &  \text{rank} (\bm X_1) =  1.
	\end{align}
	with 
	\begin{equation}\label{var_def}
	\qquad
	\bm{X}        \eqdef  \left[\begin{array}{cc} \bm X_1 & \bm{x}\\ \bm{x}^T  & 1 \end{array} \right], \quad 
	\bm{E}_i      \eqdef  \left[\begin{array}{cc} \bm{A}_i   & \bm{b}_i\\ \bm{b}_i^T  & \bm{c}_i \end{array} \right],\,\, i = 0, 1,\dots, m.
	\end{equation}
	The above formulation is ill-posed/intractable,  due to  the non-convex rank-1 constraint. 
	We will   our discussion to a well-known convexified program, known as  semidefinite  relaxation.

	\subsection{Convex relaxation}
	Consider the Fenchel dual of \eqref{ab_qcqp}, which can be explicitly written as	
		\begin{align}\label{pri0}
	\qquad\qquad	\underset{\bm{x},  x_{22}}{\text{minimize}}&\quad  - \langle\bm{x},\bm{c}\rangle - x_{22}  \nonumber\\
	\qquad\qquad	\text{subject\,to}
	& \quad \left[\begin{array}{cc} \bm{A}_0 + \sum_{i = 1 }^{m} x_i\bm{A}_i & \bm{b}_0 + \bm{B}\bm{x}\\ (\bm{b}_0 + \bm{B}\bm{x})^T  & -x_{22} \end{array} \right] \succeq 0, \nonumber\\
	& \quad \bm{x} \geq 0,  \tag{primal}
	\end{align}
	where
	\begin{equation}
	\bm{B}\eqdef [\bm{b}_1\cdots, \bm{b}_m] \in \mathbb{R}^{N \times m} , \qquad
	 \bm{c}\eqdef [{c}_1\cdots, {c}_m] \in \mathbb{R}^{m }.
	\end{equation}
	By appealing to the duality,  the above program is now convex. Following  the literature of SDP, we  refer to the above as the primal problem. 
	Take  the dual operation again, we  obtain the dual problem as
	\begin{align}\label{dual0}
	\qquad\qquad\,\,  \underset{\bm{X}}{\text{minimize}}\quad\,\,\,  &   \langle\bm{E}_0, \bm{X}\rangle  \nonumber\\
	\qquad\qquad\,\,	\text{subject\,to}\quad\,\,\,				 &  \langle\bm{E}_i, \bm{X}\rangle \leq 0, \quad  i = 1,\dots, m ,\nonumber\\
	&  \bm{X}  \succeq 0,  \tag{dual}
	\end{align}
	with variables defined in \eqref{var_def}, restated here as  
	\begin{equation}
	\qquad
	\bm{X}        \eqdef  \left[\begin{array}{cc} \bm X_1 & \bm{x}\\ \bm{x}^T  & 1 \end{array} \right], \quad 
	\bm{E}_i      \eqdef  \left[\begin{array}{cc} \bm{A}_i   & \bm{b}_i\\ \bm{b}_i^T  & \bm{c}_i \end{array} \right],\,\, i = 0, 1,\dots, m.
	\end{equation}

	Compare the above to the original QCQP,  we see that the only difference  is that the non-convex rank-1 constraint is replaced by a convex positive semidefinite requirement. In view of  this, such a approach is  often referred to as the \textit{semidefinite relaxation}. Particularly, let us note that
	\begin{equation}\label{SDR}
\bm{X} \eqdef \,\,  \left[\begin{array}{cc} \bm{X}_1& \bm{x}\\ \bm{x}^T  & 1 \end{array} \right] \succeq 0  \quad \Longleftrightarrow \quad	\bm{X}_1\succeq \bm{x}\bm{x}^T ,
	\end{equation}
	with equality attained if and only if $ \bm{X}_1 = \bm{x}\bm{x}^T  $, i.e., $ \text{rank} (\bm X_1) =  1 $. In which case, the relaxation is  tight.

	\subsubsection{Unified treatment}
	For  the ADMM solver, there is  no iteration-number-complexity difference for  solving \eqref{pri0}  and \eqref{dual0}. One may simply select the problem that is  easier to solve. In fact, no matter which problem one eventually chooses, the ADMM iterates share the same structure.
\begin{prop}[unified treatment]\label{prop_uni}
	The primal-dual problem  \eqref{pri0}  and \eqref{dual0} can be solved in the following unified way:
	\begin{align}\label{sdp_uni0}
&\,\underset{\bm{X},\bm{Z}}{\text{minimize}}\quad   f\,(\bm{X}) +  \delta_{\mathbb{S}^{N+1}_+} (\bm{Z})   \qquad \nonumber\\
&\text{subject\,to}\quad  \mathcal{A}\,  \big(\bm{X}\big) = \bm{Z}, 					
\end{align}
via  ADMM iterates:
\begin{align}\label{admm_sca0}
\bm{X}^{k+1} =\,\,& \underset{\bm{X}}{\text{argmin}} \,\,\,	f(\bm{X})  + \frac{1}{2}\Vert\mathcal{S}\mathcal{A}\bm{X} - \widetilde{\bm Z}^k  + \widetilde{\bm \Lambda}^k\Vert^2  \nonumber\\				
\widetilde{\bm Z}^{k+1} =\,\, &   	\Pi_{\mathbb{S}^{N+1}_+}\, \big(\, \widetilde{\bm{X}}^{k+1} + \widetilde{\bm{\Lambda}}^{k}  \big) \nonumber\\
\widetilde{\bm\Lambda}^{k+1} =\,\,  & \widetilde{\bm\Lambda}^{k} +  \widetilde{\bm{X}}^{k+1} - \widetilde{\bm  Z}^{k+1} ,	   \tag{scaled metric-ADMM}
\end{align}	
with  variable  substitutions:
\begin{equation}\label{sc_var}
\widetilde{\bm{X}}^k =  \mathcal{S}\mathcal{A}\bm{X}^{k},  \qquad
\widetilde{\bm Z}^{k} =  \mathcal{S}\bm{Z}^{k}, \qquad \widetilde{\bm{\Lambda}}^{k+1} = (\mathcal{S}^*)^{-1}\bm{\Lambda}^{k}.
\end{equation}

For \eqref{pri0}, the matrix variable $ \bm{X} $ reduces to a vector $ [\bm{x},  x_{22}]^T	 $, and
the following definitions are used: 
\begin{align}\label{pri_d0}
\quad(\text{Function}) \qquad\,\,\,	&\,f\,\big([\bm{x},  x_{22}]^T \big) \,= \,\, f_1(\bm{x})  - x_{22} =  - \langle\bm{x},\bm{c}\rangle - x_{22}	, \quad \text{dom}\, f_1 =  \mathscr R_{+}, 	\nonumber\\
\,\,(\text{Constraint} )  \qquad 	 & \mathcal{A}\,  \big([\bm{x},  x_{22}]^T \big) =  \bm{A}_0 + \sum_{i=1}^m  x_i\bm{A}_i +  x_{22} \big( \bm{b}_0 + \bm{B}\bm{x} \big)\big( \bm{b}_0 + \bm{B}\bm{x} \big)^T.
\end{align}

For \eqref{dual0}, the following definitions are used: 
\begin{align}
\quad(\text{Function}) \qquad\,\,\,	&f\,(\bm{X}) \,\,=\langle\bm{E}_0, \bm{X}\rangle , \quad  \text{dom}\, f =  \{ \bm{X} \in \mathbb{S}^{N+1} \,\, |\,\,  \langle\bm{E}_i, \bm{X}\rangle \leq 0,  \,\, \forall i  \} 		\nonumber\\
\,\,(\text{Constraint} ) \qquad	 &\mathcal{A}\,  \big(\bm{X}\big) =  \bm{X}.
\end{align}
	\end{prop}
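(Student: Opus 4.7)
The plan is to decompose the verification into two parts: first confirm that each of \eqref{pri0} and \eqref{dual0} is an instance of the abstract splitting \eqref{sdp_uni0} under the choices of $f$ and $\mathcal{A}$ declared in the statement, and then appeal to the iterate template \eqref{sdp_scaled}, which was already derived for \eqref{sdp_uni} earlier in the paper by specializing the metric ADMM \eqref{admm_met} and collapsing the $\bm{Z}$-update via Proposition~\ref{pro_clo} onto the Euclidean projector $\Pi_{\mathbb{S}^{N+1}_+}$. In particular, no genuinely new algorithmic derivation is required at the iterate level once the two identifications are in place.

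For the dual case, the identification is essentially tautological: take $\mathcal{A}=\mathcal{I}$ on $\mathbb{S}^{N+1}$, fold the affine constraints $\langle\bm{E}_i,\bm{X}\rangle\le 0$ into $\operatorname{dom} f$, let the linear term $\langle\bm{E}_0,\bm{X}\rangle$ supply the value of $f$, and transfer the conic requirement $\bm{X}\succeq 0$ onto the slack variable through $\bm{X}=\bm{Z}$ with $\bm{Z}\in\mathbb{S}^{N+1}_+$. For the primal case, I would collect the joint variable $[\bm{x},x_{22}]^T$, identify $\mathcal{A}$ with the $(N+1)\times(N+1)$ symmetric block operator implicit in the LMI of \eqref{pri0}, and let $f$ encode the affine objective $-\langle\bm{x},\bm{c}\rangle-x_{22}$ together with the orthant constraint $\bm{x}\ge 0$ through its effective domain $\mathbb{R}_+^m\times\mathbb{R}$. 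Injectivity of $\mathcal{A}$ in this setting would follow from linear independence of the $\bm{A}_i$'s, exactly as was argued around \eqref{A_til}, so the hypotheses underlying \eqref{admm_met} are met.

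With both formulations housed in \eqref{sdp_uni0}, the closed-form iterates \eqref{admm_sca0} together with the scaled substitutions \eqref{sc_var} then follow verbatim by instantiating \eqref{sdp_scaled}: the $\widetilde{\bm Z}$-update reduces to $\Pi_{\mathbb{S}^{N+1}_+}$ via Proposition~\ref{pro_clo} applied to any metric constructed through \eqref{stan_a}; the $\widetilde{\bm\Lambda}$-update becomes the plain residual step $\widetilde{\bm\Lambda}^k+\widetilde{\bm X}^{k+1}-\widetilde{\bm Z}^{k+1}$ after scaling; and the $\bm{X}$-update is a quadratic-plus-affine problem in the respective variable. The step I expect to require the most care is the primal-side bookkeeping: verifying that $\mathcal{A}$ indeed acts linearly on $(\bm{x},x_{22})$ when viewed jointly as a block matrix (the coupling $(\bm{b}_0+\bm{B}\bm{x})$ appearing linearly in the off-diagonal blocks and $-x_{22}$ sitting alone in the bottom-right block), and that the resulting primal $\bm{X}$-update remains well defined and tractable over $\mathbb{R}_+^m\times\mathbb{R}$. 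Once these verifications are dispatched, the proof reduces to an explicit instantiation of machinery already built up in the preceding sections.
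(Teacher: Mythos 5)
Your proposal is correct and follows essentially the same route the paper takes: the paper offers no separate proof of this proposition, treating it precisely as you do — an instantiation of the unified form \eqref{sdp_uni} and the scaled iterates \eqref{sdp_scaled} (with the projection step justified by Proposition~\ref{pro_clo}) under the two identifications of $f$ and $\mathcal{A}$. Your reading of the primal constraint operator as the affine block-matrix map from \eqref{pri0} (rather than the nonlinear-looking expression printed in \eqref{pri_d0}) is the correct one and is what the paper's own closed-form $\bm{X}$-update in Section~6.2.2 actually uses.
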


	\subsubsection{A  closed-form  solver}\label{sec_pri_solver}
	Here,   we  specify  the closed-form ADMM iterates for \eqref{pri0}, due  to its general solvability.  For the dual problem,  the challenge lies on the inequalities $  \langle\bm{E}_i, \bm{X}\rangle \leq 0 , \forall  i$,  which appear  not convenient to handle in  general.


	
	In view of ADMM \eqref{admm_sca0}, only the $ \bm{X} $-update step is implicit. Hence, our goal is to find the explicit solution  of the  following  problem:
	\begin{equation}
	\underset{\bm{X}}{\text{argmin}} \,\,\,	f(\bm{X})  + \frac{1}{2}\Vert\mathcal{S}\mathcal{A}\bm{X} - \widetilde{\bm Z}^k  + \widetilde{\bm \Lambda}^k\Vert^2.
	\end{equation}
	
	To  this  end,  invoke the primal problem definitions in   \eqref{pri_d0}, the above is  specified into 
	\begin{equation}
	\underset{\bm{x}\in \mathscr R_{+},  x_{22}}{\text{argmin}} \,\,\, - \langle\bm{x},\bm{c}\rangle - x_{22} 
	+ \frac{1}{2} \left\Vert 
	\left[\begin{array}{cc} \bm{S}_1 &      \bm{s}_0   \\ \bm{s}_0^T &    {s}_{22}  \end{array}\right] \, \odot\, 
	\left[\begin{array}{cc} \bm{A}_0 + \sum_i x_i\bm{A}_i & \bm{b}_0 + \bm{B}\bm{x}\\ (\bm{b}_0 + \bm{B}\bm{x})^T  & -x_{22} \end{array} \right] - \widetilde{\bm Z}^k  + \widetilde{\bm \Lambda}^k  \right\Vert^2,
	\end{equation}
	with $ \bm{S}_1  \in \mathbb{S}^N $, $ \bm{s}_0  \in \mathbb{R}^{N\times 1 } $, and  $ {s}_{22}  \in \mathbb{R} $.	
	Clearly, this problem is separable w.r.t.  $  \bm{x} $  and $ x_{22} $.

	To proceed, define the following  partitions for the  scaled  variables  as in \eqref{sc_var}:
	\begin{equation}\label{par}
	\widetilde{\bm Z} = \left[\begin{array}{cc} \widetilde{\bm Z}_1 & \widetilde{\bm z}_0\\ \widetilde{\bm z}_0^\text{T} & z_{22} \end{array}\right],  \quad
	 \widetilde{\bm \Lambda} = \left[\begin{array}{cc}  \widetilde{\bm \Lambda}_1 & \widetilde{\bm \lambda}_0\\  \widetilde{\bm \lambda}_0^\text{T} & \lambda_{22} \end{array}\right],
	\end{equation}
	with  $ \widetilde{\bm Z}, \widetilde{\bm \Lambda}  \in \mathbb{S}^N $, $ \widetilde{\bm z}_0, \widetilde{\bm \lambda}_0  \in \mathbb{R}^{N\times 1 } $, and  $ z_{22} ,  \lambda_{22} \in \mathbb{R} $.	
	We arrive at
	\begin{equation}
	\underset{\bm{x}\in \mathscr R_{+}}{\text{argmin}} \,\, - \langle\bm{x},\bm{c}\rangle
	+ \frac{1}{2} \left\Vert \bm{S}_1  \, \odot\, \big( \bm{A}_0 + \sum_i x_i\bm{A}_i \big) - \widetilde{\bm Z}_1^k  + \widetilde{\bm \Lambda}_1^k  \right\Vert^2
	+  \left\Vert \bm{s}_0  \, \odot\, \big( \bm{b}_0 + \bm{B}\bm{x} \big) - \widetilde{\bm z}_0^k  + \widetilde{\bm \lambda}_0^k  \right\Vert^2,
	\end{equation}
	which  can be rewritten  into
	\begin{equation}
	\underset{\bm{x}}{\text{argmin}} \,\, - \langle\bm{x},\bm{c}\rangle
	+ \frac{1}{2} \left\Vert  \text{vec}( \bm{S}_1 ) \odot \widetilde{\bm{A}}\bm x + \text{vec} \big(  \bm{S}_1  \odot  \bm{A}_0 - \widetilde{\bm Z}_1^k  + \widetilde{\bm \Lambda}_1^k  \big) \right\Vert^2
	+  \left\Vert \bm{s}_0  \, \odot\, \big( \bm{b}_0 + \bm{B}\bm{x} \big) - \widetilde{\bm z}_0^k  + \widetilde{\bm \lambda}_0^k  \right\Vert^2, \,\, \bm{x}\geq 0,
	\end{equation}	
	where
	\begin{equation}\label{A_t}
	 \widetilde{\bm{A}} \eqdef [ \text{vec}(\bm{A}_1) , \cdots, \text{vec}(\bm{A}_m)  ],
	\end{equation}
	and  where $ \text{vec} (\cdot)$  denotes a vectorization step.
	The  first-order optimality condition  is therefore given by
	\begin{align}
	0 =&  - \bm{c}	+   \widetilde{\bm{A}}^T  \text{vec}( \bm{S}_1 ) \odot\bigg(  \text{vec}( \bm{S}_1 ) \odot \widetilde{\bm{A}}   \bm x + \text{vec} \big(  \bm{S}_1  \odot  \bm{A}_0 -\widetilde{\bm Z}_1^k  +\widetilde{\bm \Lambda}_1^k  \big)   \bigg) \nonumber\\
	&+  2 \bm{B}^T \bm{s}_0  \odot \bigg(   \bm{s}_0  \odot   \bm{B}\bm{x} +   \bm{s}_0   \odot \bm{b}_0 -   \widetilde{\bm z}_0^k  +   \widetilde{\bm \lambda}_0^k  \bigg) , \quad \bm{x}\geq 0,
	\end{align}
	which gives solution
	\begin{equation}
	\bm x = \bm{D}^{-1} (\bm{t}_1  + \bm{t}_2  +  \bm{c}), \quad \bm{x}\geq 0,
	\end{equation}
	with 
	\begin{align}\label{v_def}
	&\bm{D}  	\,\,\eqdef\quad    \widetilde{\bm{A}}^T  \text{vec}( \bm{S}_1 ) \odot  \text{vec}( \bm{S}_1 ) \odot \widetilde{\bm{A}} +   2 \bm{B}^T  \bm{s}_0  \odot   \bm{s}_0  \odot   \bm{B},   \nonumber\\
	&\bm{t}_1   \,\,\eqdef\,\,   -  \widetilde{\bm{A}}^T  \text{vec}( \bm{S}_1 ) \odot  \text{vec} \bigg(  \bm{S}_1  \odot  \bm{A}_0 -\widetilde{\bm Z}_1^k  +\widetilde{\bm \Lambda}_1^k  \bigg)  ,\nonumber\\
	&\bm{t}_2 	\,\,\eqdef\,\,   -  2 \bm{B}^T \bm{s}_0  \odot \bigg(   \bm{s}_0  \odot   \bm{B}\bm{x} +   \bm{s}_0   \odot \bm{b}_0 -   \widetilde{\bm z}_0^k  +   \widetilde{\bm \lambda}_0^k  \bigg) .
	\end{align}
	At last, the non-negativity requirement  is easy to satisfy, and we arrive the final result:
	\begin{equation}\label{x_s}
	\bm x^{k+1} = \text{max} \bigg\{ \, \bm{D}^{-1} (\bm{t}_1  + \bm{t}_2  +  \bm{c}), \,\,  \bm{0}  \, \bigg\}.
	\end{equation}

		For scalar variable $ x_{22} $,  the answer  is straightforward
	\begin{equation}
	\underset{x_{22} }{\text{argmin}} \,\, - x_{22} 
	+ \frac{1}{2} \left\Vert - {s}_{22} \cdot x_{22} - \widetilde{ z}_{22}^k  + \widetilde{\lambda}_{22}^k  \right\Vert^2,
	\end{equation}
	with
	\begin{equation}
	x_{22}^{k+1}  =  \frac{1}{{s}_{22}} \bigg(  - \widetilde{z}_{22}^k  + \widetilde{\lambda}_{22}^k +  \frac{1}{{s}_{22}}  \bigg).
	\end{equation}

	In summary, we obtain the following metric ADMM  solver.
	\begin{algorithm}[H]
		\caption{  Metric-ADMM solver  for convexified QCQP}
		\begin{algorithmic}[1]\label{qcqp_solver}
			\STATE Let  $\bm{\Lambda}^{0}\leftarrow\bm{0}$,\,\, $\bm{Z}^{0}\leftarrow\bm{0}$.  Recall  partitioning  \eqref{par}.
			\STATE{Repeat the following iterations until convergence:}
			\vspace{5pt}
			\STATE \quad\quad $ 	\bm x^{k+1} \leftarrow \text{max} \bigg\{ \, \bm{D}^{-1} (\bm{t}_1  + \bm{t}_2  +  \bm{c}), \,\,  \bm{0}  \, \bigg\}  $,   \quad // see definition \eqref{v_def}\vspace{8pt} 
			\STATE \quad\quad $ 	x_{22}^{k+1} \leftarrow \frac{1}{{s}_{22}} \big(  - \widetilde{z}_{22}^k  + \widetilde{\lambda}_{22}^k +  \frac{1}{{s}_{22}} \big)    $,     \vspace{8pt}  
			\STATE \quad\quad $ \bm{X}^{k+1}  \eqdef \left[\begin{array}{cc} \bm{A}_0 + \sum_{i = 1 }^{m} x_i\bm{A}_i & \bm{b}_0 + \bm{B}\bm{x}\\ (\bm{b}_0 + \bm{B}\bm{x})^T  & -x_{22} \end{array} \right]  $,  \vspace{8pt} 
			\STATE \quad\quad$ \widetilde{\bm{Z}}^{k+1}  \leftarrow 	\Pi_{\mathbb{S}^{N+1}_+}\, \big( \mathcal{S}\bm{X}^{k+1} + \widetilde{\bm{\Lambda}}^{k}  \big)     $,    \vspace{8pt}
			\STATE \quad\quad$ \widetilde{\bm{\Lambda}}^{k+1} \leftarrow  \widetilde{\bm\Lambda}^{k} +   \mathcal{S}\bm{X}^{k+1} - \widetilde{\bm  Z}^{k+1}    $.		\vspace{5pt}	
		\end{algorithmic}
	\end{algorithm}

	\begin{remark}[matrix inverse]
	In view of \eqref{x_s}, we note that the solution involves computing a matrix inverse. At first glance, it may be expensive. Let us note  that $ \bm{D} \in \mathbb{R}^{m\times m} $, where $ m $ is typically  small. 
	Moreover, for interpretation purpose, we may view \eqref{dual0}, where $ m $ corresponds to the number of constraints. Clearly, as  $ m $ increases, the problem becomes more complicated. Not surprisingly, 
	we should expect the algorithm runtime to increase, and this is reflected as the increased difficulty to calculate   $ \bm{D}^{-1} $.
	\end{remark}

	\subsection{Reduction to standard SDP}
Here,  we show that by simply removing some  terms in Algorithm \ref{qcqp_solver}, we obtain a metric-ADMM  solver for the standard SDP problems.  In  this sense,  the QCQP is slightly more general than the standard   SDP. 
	
  Consider  the case with  $ \bm{b}_i= \bm{0},\,\,i=0,1,\dots,m $.  \eqref{pri0} can be rewritten into
		\begin{align}
	\qquad\qquad	\underset{\bm{x},  x_{22}}{\text{minimize}}&\quad  - \langle\bm{x},\bm{c}\rangle - x_{22}  \nonumber\\
	\qquad\qquad	\text{subject\,to}
	& \quad \left[\begin{array}{cc} \bm{A}_0 + \sum_{i = 1 }^{m} x_i\bm{A}_i & \bm{0}\\ \bm{0} & -x_{22} \end{array} \right] \succeq 0, \nonumber\\
	& \quad \bm{x} \geq 0,
	\end{align}
	In this case,  we instantly have the solution $ x_{22}^\star =0 $, since $ -x_{22}  \geq 0 $  (by  Schur complement  lemma) and  we are minimizing $ -x_{22} $.
	That  said,  the above problem is equivalent to
	\begin{align}
	\qquad\qquad	\underset{\bm{x}}{\text{minimize}}&\quad \langle\bm{x},  - \bm{c}\rangle  \nonumber\\
	\qquad\qquad	\text{subject\,to}
	& \quad  \bm{A}_0 + \sum_{i = 1 }^{m} x_i\bm{A}_i  \succeq 0, \nonumber\\
	& \quad \bm{x} \geq 0,
	\end{align}
	Furthermore, if we  also remove the positivity constraint  $ \bm{x} \geq 0 $,   then  we obtain exactly the  standard SDP problem. 
Hence,  in a simple  way, we derived the   metric-ADMM solver for the standard SDP.
		\begin{algorithm}[H]
		\caption{  Metric-ADMM solver  for the standard  SDP}
		\begin{algorithmic}[1]
			\STATE Let  $\bm{\Lambda}^{0}\leftarrow\bm{0}$,\,\, $\bm{Z}^{0}\leftarrow\bm{0}$.  Recall  partitioning  \eqref{par}.
			\STATE{Repeat the following iterations until convergence:}
			\vspace{5pt}
			\STATE \quad\quad $ 	\bm x^{k+1} \leftarrow  \bm{D}^{-1} (\bm{t}_1   +  \bm{c})  $,   \quad // see definition \eqref{v_def} with $ \bm{B} = \bm{0} $\vspace{8pt} 
			\STATE \quad\quad $ \bm{X}^{k+1}  \eqdef  \bm{A}_0 + \sum_{i = 1 }^{m} x_i\bm{A}_i $,  \vspace{8pt} 
			\STATE \quad\quad$ \widetilde{\bm{Z}}^{k+1}  \leftarrow 	\Pi_{\mathbb{S}^{N}_+}\, \big( \mathcal{S}\bm{X}^{k+1} + \widetilde{\bm{\Lambda}}^{k}  \big)     $,    \vspace{8pt}
			\STATE \quad\quad$ \widetilde{\bm{\Lambda}}^{k+1} \leftarrow  \widetilde{\bm\Lambda}^{k} +   \mathcal{S}\bm{X}^{k+1} - \widetilde{\bm  Z}^{k+1}    $.		\vspace{5pt}	
		\end{algorithmic}
	\end{algorithm}

%
%
%

	\subsection{Specific applications}\label{sec_app}
	In this section, we briefly present  two  specific applications that admit the  QCQP structure.

	\subsubsection{Matrix-fractional problem}
First, consider the matrix-fractional problems \cite[Sec. 2.4]{lobo1998applications}
	\begin{align}
	\underset{\bm{x}}{\text{minimize}}&\quad  (\bm{b}_0 + \bm{B}\bm{x})^T( \bm{A}_0 + \sum_i x_i\bm{A}_i)^{-1}(\bm{b}_0 + \bm{B}\bm{x})  \nonumber\\
	\text{subject\,to}
	& \quad \bm{A}_0 + \sum_i x_i\bm{A}_i \succeq 0,\nonumber\\
	& \quad \bm{x} \geq 0.
	\end{align}
	By Schur compliment lemma, the above can be rewritten into
	\begin{align}\label{mat_frac_formu}
	\underset{\bm{x}, t}{\text{minimize}}&\qquad   t \nonumber\\
	\text{subject\,to}
	& \quad \left[\begin{array}{cc} \bm{A}_0 + \sum_i x_i\bm{A}_i & \bm{b}_0 + \bm{B}\bm{x}\\ (\bm{b}_0 + \bm{B}\bm{x})^T  & t \end{array} \right] \succeq 0,\nonumber\\
	& \quad \bm{x} \geq 0,   \tag{mat. fra.}
	\end{align}
	Comparing the above to \eqref{pri0}, we see that it is a special case of $ \bm{c} = \bm{0} $ and variable $ t = - x_{22} $.

	\subsubsection{Boolean quadratic program}	
	Here, we consider the Boolean quadratic program (BQP), which is a fundamental problem in digital communication. 
		
	The original non-convex  form is given by
	\begin{align}
	\underset{\bm{x}}{\text{minimize}}&\quad   \bm{x}^T\bm{A}_0\bm{x} + 2\bm{b}_0^T\bm{x} \nonumber\\
	\text{subject\,to}
	& \quad {x}_i^2 = 1, \quad \forall i.
	\end{align}
	Its semidefinite relaxation is well-known to be 
	\begin{align}\label{r_BQP}
	\underset{\bm{X}}{\text{minimize}}\quad  & \langle  \bm{E}_0, \bm{X} \rangle \nonumber\\
	\text{subject\,to}\quad 
		& \text{diag}(\bm{X})= \bm{1}\nonumber \\ 
	& \bm{X} \succeq 0 , \tag{BQP dual}
	\end{align}
	with
	\begin{equation}
	\bm{E}_0	\eqdef  \left[\begin{array}{cc} \bm{A}_0\,\,\, & \bm{b}_0 \\ \bm{b}_0^\text{T}  & 0 \end{array} \right], \quad \bm{X}	\eqdef  \left[\begin{array}{cc} \bm{X}_1\,\,\, & \bm{x} \\ \bm{x}^\text{T}  & 1 \end{array} \right].
	\end{equation}
	Comparing the above BQP problem to \eqref{dual0},  
	it can be viewed a special case of  choosing (i)  $ \bm{E}_i  $ to be elementary matrix with only the $ i $-th diagonal element equals $ 1 $ and zeros elsewhere; (ii)  $ \bm{c} =  -\bm{1} $, i.e., a negative  ones  vector.

	 It follows  that,  the primal  formulation of  the BQP is  
	\begin{align}\label{bqp_pri}
	\qquad\qquad	\underset{\bm{x},  x_{22}}{\text{minimize}}&\quad   \langle\bm{x},\bm{1} \rangle - x_{22}  \nonumber\\
	\qquad\qquad	\text{subject\,to}
	& \quad \left[\begin{array}{cc} \bm{A}_0 + \sum_{i = 1 }^{N} x_i\bm{A}_i & \bm{b}_0 \\ \bm{b}_0^T  & -x_{22} \end{array} \right] \succeq 0, \nonumber\\
	& \quad \bm{x} \geq 0,  \tag{BQP  primal}
	\end{align}
	At first  glance, solving the above primal formulation is less efficient (runtime sense) than \eqref{r_BQP}, because we  have $ m=N $  number  of matrices $ \bm{A}_i $.  However, we  emphasize that $ \bm{A}_i $  is highly structured, and essentially  the LMI is simply
	\begin{equation}
	\bm{A}_0 + \text{Diag}(\bm{x}),  
	\end{equation}
	where $ \text{Diag}(\cdot) $ denotes reshaping  a vector into a diagonal  matrix. 
	Moreover, in view of Algorithm \ref{qcqp_solver},  the matrix inversion  $ \bm{D}^{-1} $ reduces to an element-wise division operation. We believe it  is completely the same for solving   the  BQP primal  and dual problems.

	\section{Numerical results}
	In this section, we test the  numerical  performance of our metric-ADMM  solver  via  the two applications  in Sec. \ref{sec_app}.
	Recall the  solver implementation in Algorithm \ref{qcqp_solver}, and optimal metric choice determined via Algorithm \ref{Algo_1}. 
	The stopping criteria  is based on $ (1/N) \Vert \bm{x}^k - \bm x^\star \Vert^2 \leq \epsilon$, with $ \bm x^\star \in \mathbb{R}^N $ denotes the optimal solution, where $ \epsilon $ is set to $ 1\times 10^{-8} $.\\

	\noindent\textbf{General data setting:}   Unless specified, all  data  is  generated randomly  via  zero-mean normal distribution  $ \mathcal{N}(0,\sigma) $. 	For reproduction purpose, we fix the random number generator to  the `default' mode in MATLAB
	for all simulations. 	
	Particularly, we guarantee the positive (semi)  definiteness of a matrix $ \bm{A}  $ by operation  $ \sqrt{\bm{A}^T \bm{A}} $. 
	\\

	\noindent\textbf{Parameter settings:}  
	We will investigate 3 types  of parameters: 
	(i) the `scalar limit', referring to the underlying best scalar step-size,  found by exhaustive search;
	(ii) $ \mathcal{M}^\star $, denoting the optimal choice of the variable metric;	 
	(iii) $ \gamma^\star =  \Vert \bm{\Lambda}^\star \Vert / \Vert \mathcal{A}\bm{X}^\star\Vert $.
	\\

	\noindent\textbf{The worst case:} 
	As shown in Sec. \ref{sec_simple_data},  the  worst case arises if the data  is simple-structured,  i.e., no ill-conditioning  structure within the  block variable. 
	In the simulations, we find that if we randomly generate all data in the exact  same  way,  then the worse  case  arises,  i.e., 
	condition \eqref{rel_ori}  roughly holds.  Indeed, numerical results show that there is hardly  any   efficiency  gain in  this  case.


	\subsection{Matrix-fractional problem}
In this section, we consider the matrix-fractional problem introduced in \eqref{mat_frac_formu}.
The problem data is fully characterized by  $ \{\bm{A}_i, \bm{b}_i\}_{i=0}^m $.

\subsubsection{Scalability: different conditionings}
Here,  we evaluate scalability  property in 
the iteration number complexity sense.  The elements of matrices $ \{\bm{A}_i\}_{i=0}^m $ and vectors $ \{\bm{b}_i\}_{i=0}^m $ are randomly generated from  $  \mathcal{N}(0,\sigma_A)  $ and $  \mathcal{N}(0,\sigma_b)  $, respectively.
 
	\begin{figure}[H]
		\centering
		\begin{subfigure}{0.32\textwidth}
			\includegraphics[scale=0.32]{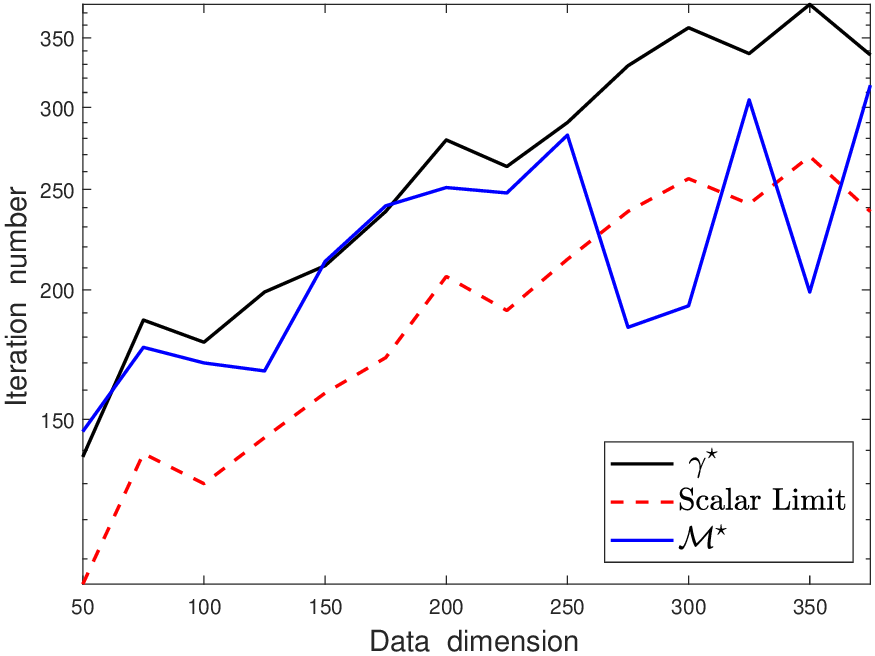}\caption{Worst case: $ \sigma_A =1,\,\,  \sigma_b= 1,\,\, 	 m = 5 $.}
		\label{a}
	\end{subfigure}
		\begin{subfigure}{0.32\textwidth}
			\includegraphics[scale=0.32]{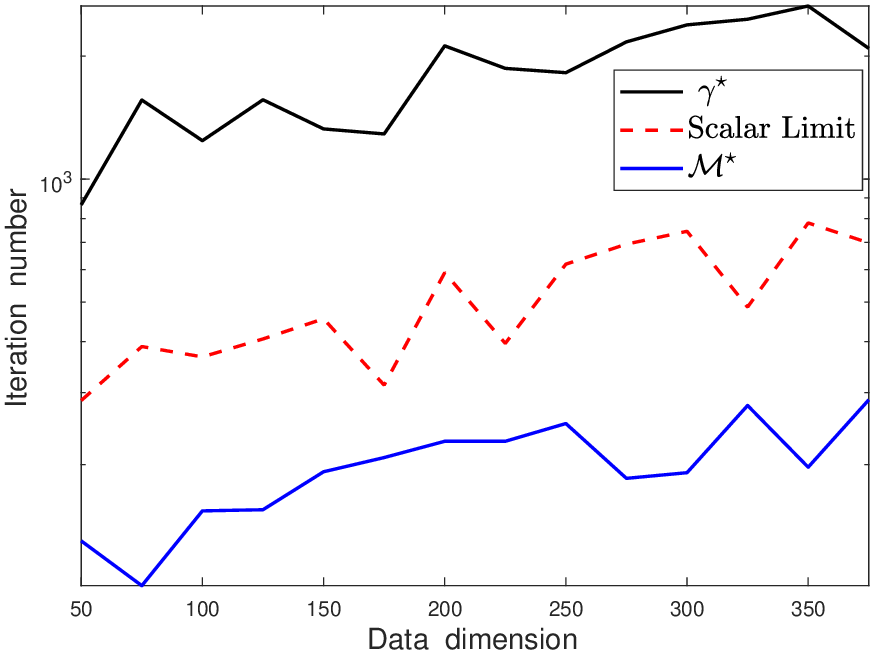}\caption{$ \sigma_A =2,\,\,  \sigma_b= 1/2,\,\, 	 m = 5 $.}
		\label{b}
	\end{subfigure}
			\begin{subfigure}{0.32\textwidth}
			\includegraphics[scale=0.32]{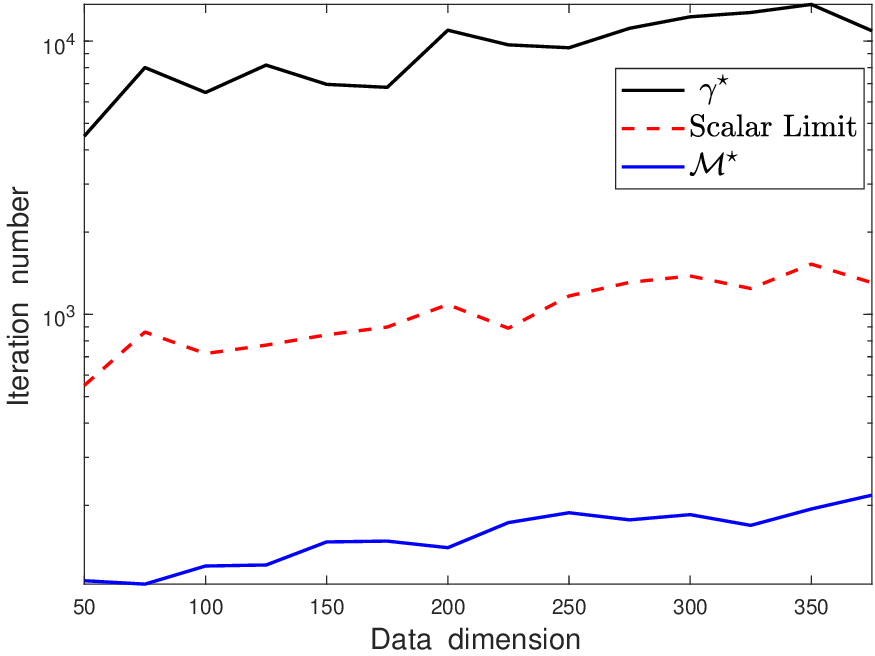}\caption{ $ \sigma_A = 3,\,\,  \sigma_b= 1/3,\,\,   m = 5 $.}
		\label{c}
	\end{subfigure}
		\caption{Iteration number complexity against data dimension.}
	\end{figure}
	
	\noindent\textbf{Performance:} 
	In Fig. \ref{a}, the  data is generated  in the exact same way.  In this  case, we  find that a scalar parameter would (almost) fully exploit the  ill-conditioning  structure, corresponding  to the worst case  of  our metric-ADMM. 
	In Fig. \ref{b}, by generating the two  types of data $ \{\bm{A}_i\}_{i=0}^m $ and  $ \{\bm{b}_i\}_{i=0}^m $  in a `reverse' way, i.e., $ \sigma_A = 1/  \sigma_b $, we promote a  special ill-conditioning  structure that cannot   be  fully exploited by a scalar parameter.  In  this case, our metric-ADMM shows roughly one order of  magnitude advantage. In Fig. \ref{c},  we  promote  a  stronger ill-conditioning structure, and  the advantage increases to roughly two  orders of  magnitudes.  
	When we further increase the ill-conditioning,  the advantage also further increases (the extra  experiments are  omitted due to limited space).  
	In all cases, we  observe that the metric-ADMM has similar performances, with iteration number roughly at $ 200 $, and increases  very slowly as the dimension increases, implying a scalable  solver is obtained.


%
%
%
%
%
%
%

	\subsection{Boolean QP}
	In this section, we consider the Boolean quadratic program problem.
	The problem data is fully characterized by $\bm{A}_0$ and  $\bm{b}_0$.

	\subsubsection{Natural ill-conditioning}
For BQP,  we note that there exists a natural ill-conditioning structure  (that cannot be fully exploited via a  scalar parameter),  appears related to requiring  the  underlying  solution being integers.

	\noindent\textbf{Data setting:}  We generate the BQP data in the  following manner: (i)  Randomly generate an integer vector $ \bm x_0 $ with entries being either $ 1 $ or $ -1 $. To achieve this, we  first randomly generate its elements via $ \mathcal{N}(-0.5, 1) $  and then take the sign operation  element-wisely (restart if a zero element arises). (ii)    Randomly generate elements of matrix $\bm{A}_0$ via  $  \mathcal{N}(0, 1)  $. (iii) Set	$ \bm b_0 = \bm{A}_0 \bm{x}_0 +  \epsilon $, with noise $ \epsilon $  randomly generate  via $ \mathcal{N}(0, 0.1) $.

	
	\begin{figure}[H]
		\centering
				\begin{subfigure}{0.32\textwidth}
			\includegraphics[scale=0.32]{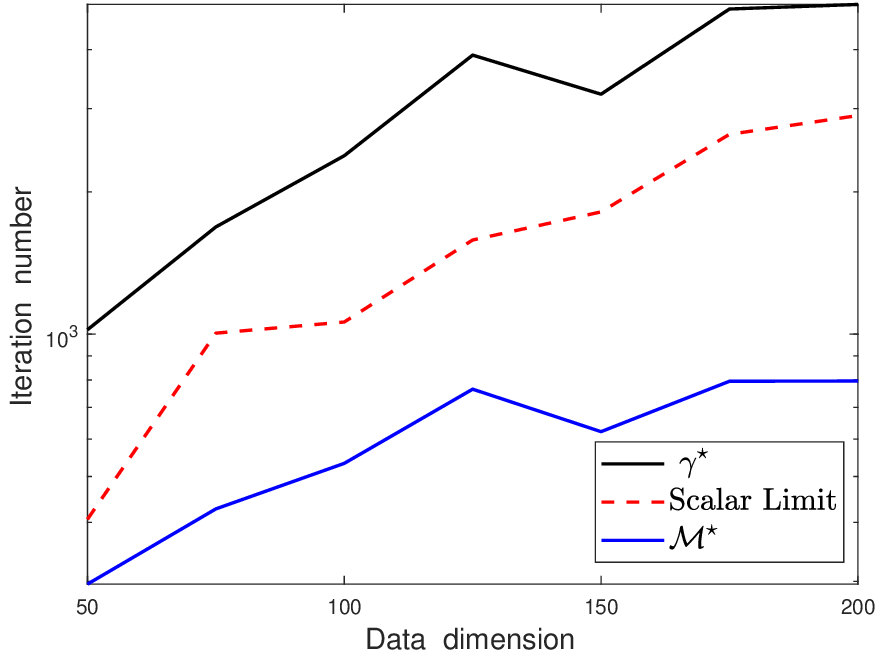}\caption{Scalability.}
			\label{02}
		\end{subfigure}
		\begin{subfigure}{0.32\textwidth}
			\includegraphics[scale=0.32]{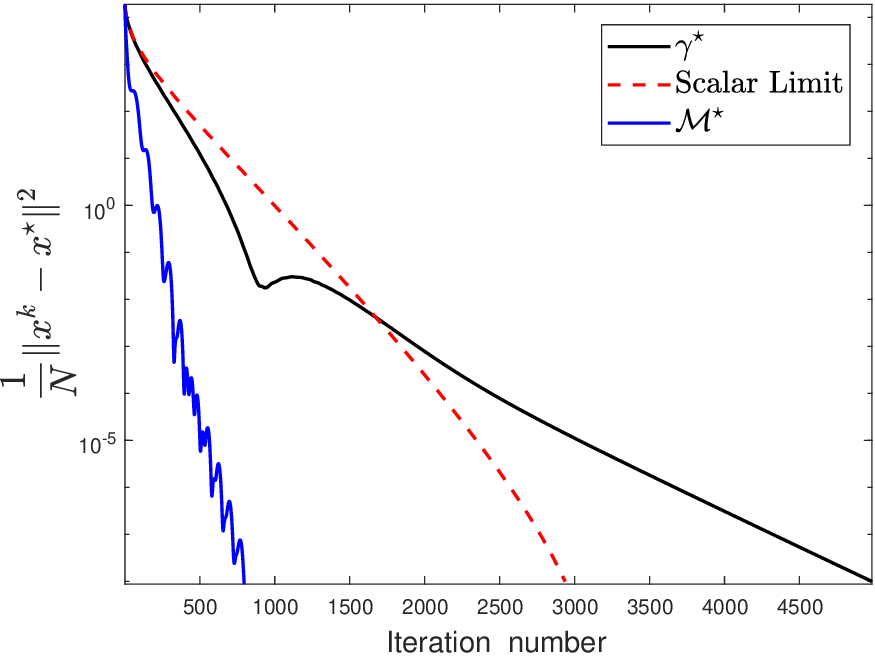}\caption{Error decreasing rate, with  $ N= 200  $}
	\label{a2}
	\end{subfigure}
		\begin{subfigure}{0.32\textwidth}
			\includegraphics[scale=0.32]{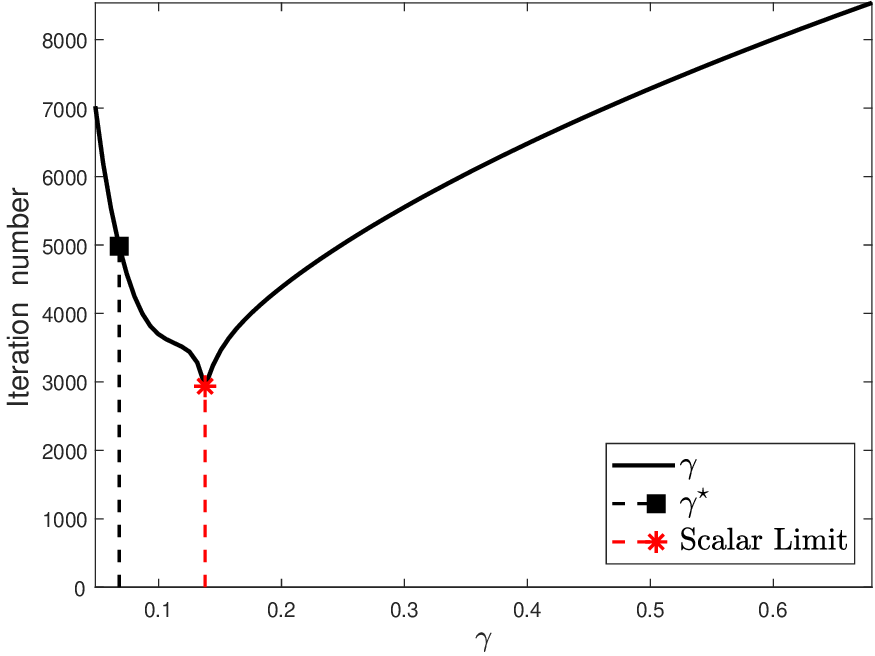}\caption{Scalar limit search.}
	\label{b2}	
	\end{subfigure}
		\caption{Iteration number complexity performance.}
		\label{fig3}
	\end{figure}

	\noindent\textbf{Performance:} 
	In Figure \ref{02}, we evaluate the scalability property of our solver, without manually promoting an ill-conditioning structure. We observe a stable, roughly $ 3\times $ iteration-complexity advantage compared to the scalar limit.
	In Figure \ref{a2}, we investigate the convergence rates  
	with data size $ N = 200 $. We observe (roughly)  linear rates. Particularly, the metric-quipped one significantly outperforms the best scalar parameter, obtained by exhaustive search.  In Figure \ref{b2}, we check the exhaustive search process.  We observe that the theoretical optimal scalar (minimized a worst-case convergence rate) is close to the scalar case limit, meanwhile  the  limit is relatively far away from  the unit case  $\gamma = 1$ (implying the necessity of parameter tuning), and changes with different data sizes (in our additionally experiments).

	\section{Conclusion}
In  this paper,  we present the first metric-equipped ADMM  for semidefinite programming  with  a worst-case performance guarantee. 
Equipping a  metric parameter is new, owing to the challenges: (i)   ADMM $ \bm{Z} $-iterate  (which handles the  positive  semidefinite constraint) no longer admits a closed-form expression; (ii) the optimal choice is open. 
We addressed these two issues in this paper. 
Additionally, we analyse the worst case of our metric-ADMM. In theory,  the worst case corresponds to when the optimal metric reduces to an optimal scalar. Moreover, we 
identify the data structure that causes it. In practice, the worst case can be interpreted as simple-structured data, i.e.,  all data generated in the exact same way. 
Numerically, we observe limited advantage in the worst case compared to employing a scalar parameter, but gains  rapidly increased extra efficiency   as the conditioning becomes worse. 
Across all different  settings, our metric-based solver shares   similar performances (implying the underlying ill-conditioning structure fully exploited), and the iteration number increases very slowly with data dimension.

	\bibliographystyle{unsrt}
	\bibliography{Reference/ref1,Reference/Ref_S,Reference/ML_application,Reference/sr_applications}
	
\end{document}